\title{\bf FRAME MEASURES FOR INFINITELY MANY MEASURES}
\author{FARIBA ZEINAL ZADEH FARHADI$^{a}$, MOHAMMAD SADEGH ASGARI$^{b}$,\\ 
MOHAMMAD REZA MARDANBEIGI$^{a*}$}
\date{}
\theoremstyle{definition}
\newtheorem{theorem}{Theorem}[section]
\newtheorem{remark}[theorem]{Remark}
\numberwithin{equation}{section}
\newtheorem{lemma}[theorem]{Lemma}
\newtheorem{proposition}[theorem]{Proposition}
\newtheorem{corollary}[theorem]{Corollary}
\newtheorem{definition}[theorem]{Definition}
\newtheorem{example}[theorem]{Example}
\newcommand{\subject}[1]{\begin{flushleft}
\textbf{2010 AMS Subject Classification}: #1\end{flushleft}}
\newcommand{\keyword}[1]{\par\noindent \textbf{Keywords:} #1}
\newcommand{\eval}[2][\right]{\relax
\ifx#1\right\relax \left.\fi#2#1\rvert}
\renewcommand{\sectionmark}[1]{}
\begin{document}
\maketitle
\begin{abstract}
For every frame spectral measure $ \mu $, there exists a discrete measure $ \nu $ as a frame measure. Since if $ \mu $ is not a frame spectral measure, then there is not any general statement about the existence of frame measures $ \nu $ for $ \mu $, we were motivated to examine Bessel and frame measures. We construct infinitely many measures $ \mu $ which admit frame measures $ \nu $, and we show that there exist infinitely many frame spectral measures $ \mu $ such that besides having a discrete frame measure, they admit continuous frame measures too.
\noindent
\vspace{.3cm}
\keyword{Fourier frame, Plancherel theorem, spectral measure, frame measure, Bessel measure.}
\subject{Primary 28A80, 28A78, 42B05.}
\end{abstract}
\section{Introduction} 
Motivated by questions of fractal frame spectral measures, Bessel and frame measures were introduced in \cite{3}. In fact, frame measures are a generalization of Fourier frames. When $ L^2(\mu) $ has a Fourier frame, $ \mu $ is called a frame spectral measure and there exists a discrete measure $ \nu $ which is a frame measure for $ \mu $. So every frame spectral measure $ \mu $ admits a discrete frame measure $ \nu $. There has been a wide range of interest in identifying frame spectral measures especially, fractal ones. The interested reader can refer to \cite {2, 4, 5, 6, 7, 10, 11, 12, 13, 14, 15, 16, 17, 18}. If $ \mu $ is not a frame spectral measure, then there is not any general statement about the existence of frame measures for $ \mu $. Nevertheless, in \cite{3} the authors showed that if one frame measure $ \nu $ exists for $ \mu $, then one can obtain many frame measures for $ \mu $ by convolution of $ \nu $ and probability measures. 

In this paper we construct infinitely many measures $ \mu $ (by using convolutions of measures) which admit frame measures $ \nu $. In addition, we obtain that there exist infinitely many frame spectral measures such that besides having an associated discrete frame measure, they admit continuous frame measures too.

The rest of this paper is organized as follows: In Section 2 basic definitions and notation are given. Section 3 is devoted to identifying Bessel/frame measures $ \nu $ and constructing measures $ \mu $ which admit Bessel/frame measures $ \nu $. We show that a finite measure $ \nu $ is a Bessel measure for a finite measure $ \mu $, if and only if $ \mu $ is a Bessel measure for $ \nu $. Therefore, every finite measure $ \mu $ is a Bessel measure to itself (Corollary \ref{MK}). We investigate connections between the existence of a Bessel/frame measure for $ \mu $, $ \mu' $ and the sum $ \mu + \mu' $. If $ \mu $ is a Borel measure on $ \mathbb{R}^d $ and if $\nu $ is a Bessel/frame measure for $\mu $, then for any $ E\subset \mathrm{supp}\mu $, the measure $ \mu'=\chi_{E}d\mu $ admits $ \nu $ as a Bessel/frame measure with the same bound(s) (Corollary~\ref{Cco}). In Theorem \ref{3.3} we show that Lebesgue measure is a frame measure for infinitely many measures which are absolutely continuous with respect to Lebesgue measure. Theorem~\ref{3.3} is extended to every frame measure for $ \mu=\chi_Fd\lambda $, i.e., if $ F\subseteq \mathbb{R}^d $ and $ \nu $ is a frame measure for $ \mu=\chi_Fd\lambda $, then $ \nu $ is a frame measure for infinitely many measures which are absolutely continuous with respect to Lebesgue measure (Theorem \ref {3.4}). We show  applications of Theorem~\ref {3.4} in Examples \ref{7.3}, \ref{8.3}, \ref{9.3} and \ref{BX}. Similar to Theorem \ref{3.4}, in Proposition \ref{SF} we obtain that if $ \mu $ is a Borel measure on $ \mathbb{R}^d $ (not necessarily Lebesgue measure or absolutely continuous with respect to Lebesgue measure) and admits a frame measure $ \nu $, then infinitely many measures which are absolutely continuous with respect to $ \mu $ admit $ \nu $ as a frame measure. We apply Proposition \ref{SF} in Examples~\ref{EPX} and \ref{Exp} for invariant measures (Cantor type measures). Finally, in Corollary \ref{coo} we conclude that there are infinitely many absolutely continuous measures with respect to Lebesgue measure and infinitely many absolutely continuous measures with respect to a Cantor measure, which admit discrete and continuous frame measures.
\section{ Preliminaries}
\begin{definition}
Let $ H $ be a Hilbert space. A sequence $ \{f_i \}_{i \in I} $ of elements in $ H $  is called a \emph{frame} for $ H $, if there exist constants $ A, B > 0 $  such that for all $ f \in H $,
\begin{equation*}
 A\| f\|^2 \leq \sum_{i \in I }|\left<f, f_i\right>|^2 \leq B\| f\|^2.  
\end{equation*}
The constants $ A $ and $ B $ are called \emph{lower (frame) bound} and \emph{upper (frame) bound}, respectively. If $ A = B $, the frame is called \emph{tight} and whenever $ A = B = 1$, the frame is called \emph{Parseval}.

The sequence $ \{f_i \}_{i \in I} $ is called \emph{Bessel} if it has a finite upper frame
bound $ B $ and does not necessarily have a positive lower frame bound $ A $.
\end{definition} 
Frames are a natural generalization of orthonormal bases. The lower bound implies that a frame is complete in the Hilbert space, so by using (infinite) linear combination of the elements $ f_i $ in the frame every $ f $ can be expressed  \cite{1}.
\begin{definition}
Let $ t\in \mathbb{R}^d $. For every $ x\in \mathbb{R}^d$  the exponential function $ e_t $ is defined by $ e_t(x) = e^{2\pi it\cdot x} $. If $ \mu $ is a Borel measure on $ \mathbb{R}^d $, then for a function $ f\in L^1(\mu) $ the Fourier transform is given by
 \begin{align*}
\widehat{f d\mu}(t)=\int_{\mathbb{R}^d} f(x) e_{-t}(x) d\mu(x) \quad (t\in \mathbb{R}^d).
\end{align*}
\end{definition}
Note that whenever $ \mu $ is a finite measure, $ e_t\in L^2(\mu) $ and  $ \widehat{f d\mu}(t)=\left<f, e_t\right> $ for every $ f\in L^2(\mu) $. 
\begin{definition}
Let $ \mu $ be a finite Borel measure on $ \mathbb{R}^d $ and $\Lambda$ be a countable set in $ \mathbb{R}^d $. If the set $ E(\Lambda)=\{e_\lambda : \lambda \in \Lambda \} $ is a frame for $ L^2(\mu) $, then $ E(\Lambda) $ is called a \emph{Fourier frame}, $ \Lambda $ is called a \emph{frame spectrum} for $ \mu $ and $ \mu $ is called a \emph{frame spectral measure}. Likewise, if $ E(\Lambda) $ is an orthonormal basis (Bessel sequence) for $ L^2(\mu) $, then $ \Lambda $ is called a \emph{spectrum (Bessel spectrum)} for $ \mu $ and $ \mu $ is called a \emph{spectral measure (Bessel spectral measure)}. 
\end{definition}
We give the following definition from \cite{3}.
\begin{definition}[\cite{3}]
Let $ \mu $ be a Borel measure on $ \mathbb{R}^d $. A Borel measure $ \nu $ is called a \emph{frame measure} for $ \mu $ if there exist positive constants $ A, B $ such that for every $ f\in L^2(\mu) $,
\begin{equation}
A\| f\|_{L^2(\mu)}^2 \leq \int_{\mathbb{R}^d} |\widehat{fd\mu}(t)|^2 d\nu(t) \leq B\| f\|_{L^2(\mu)}^2.
\end{equation}
Here $ A $ and $ B $ are called \emph{(frame) bounds} for $ \nu $. The measure $ \nu $ is called a \emph{tight frame measure} if $ A = B $ and \emph{Plancherel measure} if $ A = B =1 $ (see also \cite{5}). If Equation (2.1) has upper bound $ B $ and does not necessarily have lower bound $ A $, then $ \nu $ is called a \emph{Bessel measure} for $ \mu $ and $ B $ is called a \emph{(Bessel) bound} for $ \nu $.

 Denote by $ \mathcal{B}_B(\mu) $ the set of all Bessel measures for $ \mu $ with fixed bound $ B $  and denote by $ \mathcal{F}_{A,B}(\mu) $ the set of all frame measures for $ \mu $ with fixed bounds $ A, B $.
\end{definition}
\begin{remark} \label{2.5}
A finite Borel measure $ \mu $ is a frame spectral measure if and only if there exists a countable set $\Lambda $ in $ \mathbb{R}^d $ such that $ \nu=\sum_{\lambda \in \Lambda}\delta_\lambda $ is a frame measure for $ \mu $.
\end{remark}
\begin{definition}
 A finite set of contraction maps $ \{\tau_i\}_{i=1}^n $ on a complete metric space is called an \emph{iterated function system (IFS)}. Hutchinson \cite{9} proved that there exists a unique compact subset $ X $ of $ \mathbb{R}^d $ and an invariant measure $ \mu $ (a unique Borel probability measure supported on $ X $) such that $ X=\bigcup_{i=1}^n \tau_i(X) $ and $ \mu=\sum_{i=1}^n \rho_i (\mu \circ\tau^{-1}) $, where $ 0 <\rho_i < 1 $,  $ \sum_{i=1}^n \rho_i =1 $. This measure $ \mu $ is either absolutely continuous or singular continuous with respect to Lebesgue measure. In an affine IFS each $ \tau_i $ is affine and represented by a matrix. Let $R$ be a $ d \times d $ expanding integer matrix (i.e., all eigenvalues have modules strictly greater than $ 1 $) and let $ \mathcal{A} $ be a finite subset of $ \mathbb{Z}^d $ of cardinality $\#\mathcal{A} =: N $. Then the following set is an affine iterated function system:
\begin{equation*}
\tau_a (x) = R^{-1}(x + a) \quad (x \in \mathbb{R}^d, a \in \mathcal{A}).
\end{equation*}
Taking $R$ as an expanding matrix guarantees that all maps $\tau_a$ are contractions (in an appropriate metric equivalent to the Euclidean one). Invariant measures on Cantor type sets (Cantor type measures), which are singular continuous with respect to Lebesgue measure, are examples of invariant measures of affine IFSs (see \cite{8, 9}). 
\end{definition} 
All measures we consider in this paper are Borel measures on $ \mathbb{R}^d $. We denote Lebesgue measure by $ \lambda $ and for any set $E \subset \mathbb{R}^d $, $|E|$ denotes the Lebesgue measure of $ E $.
\section{Investigation and Construction}
In this section we examine Bessel/frame measures and we prove some results concerning measures which admit Bessel/frame measures. 
\begin{proposition}\label{p!}
\emph{Let $ \mu $ be a finite measure. Then every finite measure $ \nu $ is a Bessel measure for $ \mu $}.
\end{proposition}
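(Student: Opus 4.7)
The plan is to bound the Fourier transform $\widehat{fd\mu}(t)$ pointwise and then integrate against $\nu$. Since $\mu$ is finite, the note after the definition of the Fourier transform gives $\widehat{fd\mu}(t) = \langle f, e_t\rangle_{L^2(\mu)}$ for every $f \in L^2(\mu)$, so the first step is to apply the Cauchy--Schwarz inequality in $L^2(\mu)$:
\begin{equation*}
\bigl|\widehat{fd\mu}(t)\bigr|^2 \;=\; \bigl|\langle f, e_t\rangle_{L^2(\mu)}\bigr|^2 \;\leq\; \|f\|_{L^2(\mu)}^2 \, \|e_t\|_{L^2(\mu)}^2.
\end{equation*}

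The key observation is that $|e_t(x)| = |e^{2\pi i t\cdot x}| = 1$ for every $t, x \in \mathbb{R}^d$, so $\|e_t\|_{L^2(\mu)}^2 = \mu(\mathbb{R}^d)$, which is finite by hypothesis. In particular, the pointwise bound above is uniform in $t$:
\begin{equation*}
\bigl|\widehat{fd\mu}(t)\bigr|^2 \;\leq\; \mu(\mathbb{R}^d)\,\|f\|_{L^2(\mu)}^2.
\end{equation*}

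Integrating this inequality against the finite measure $\nu$ and pulling the $t$-independent factors outside the integral gives
\begin{equation*}
\int_{\mathbb{R}^d} \bigl|\widehat{fd\mu}(t)\bigr|^2 \, d\nu(t) \;\leq\; \mu(\mathbb{R}^d)\,\nu(\mathbb{R}^d)\,\|f\|_{L^2(\mu)}^2,
\end{equation*}
so $\nu$ is a Bessel measure for $\mu$ with bound $B = \mu(\mathbb{R}^d)\,\nu(\mathbb{R}^d)$. There is no real obstacle here; the only thing to verify is the innocuous measurability of $t \mapsto |\widehat{fd\mu}(t)|^2$, which follows from Fubini applied to the finite product measure $|f|\,d\mu \otimes d\nu$ (or simply from continuity of the Fourier transform of a finite complex measure). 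I would present this as a short two-line argument.
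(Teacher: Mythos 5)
Your argument is correct and is essentially the paper's own proof: the paper also bounds $|\langle f, e_t\rangle|$ by $(\mu(\mathbb{R}^d))^{1/2}\|f\|_{L^2(\mu)}$ via H\"older (equivalently your Cauchy--Schwarz step with $\|e_t\|_{L^2(\mu)}^2=\mu(\mathbb{R}^d)$) and then integrates over $\nu$ to get the same Bessel bound $B=\mu(\mathbb{R}^d)\nu(\mathbb{R}^d)$. No changes needed.
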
 
\begin{proof}
Let $ f\in L^2 (\mu) $ and $ t\in\mathbb{R}^d $. Using Holder's inequality, we have
\begin{equation*}
|\left<f, e_t\right>| \leq \int_{\mathbb{R}^d} \left| f(x) e_{-t}(x) \right| d\mu(x) \leq \left(\mu(\mathbb{R}^d)\right)^{\frac{1}{2}} \| f\|_{L^2(\mu)}.
\end{equation*}
Then
\begin{equation*}
\int_{\mathbb{R}^d}|\left<f, e_t\right>|^2 d\nu(t) \leq \mu(\mathbb{R}^d) \nu(\mathbb{R}^d) \| f\|^2_{L^2(\mu)}.
\end{equation*}
Hence $ \nu \in \mathcal{B}_{\mu(\mathbb{R}^d) \nu(\mathbb{R}^d)}(\mu) $.
\end{proof}
\begin{remark}\label{RE}
The above proposition shows that the Bessel bound may change for different Bessel measures $ \nu $, but for probability measures $ \nu $ we have $ \nu \in \mathcal{B}_{\mu(\mathbb{R}^d)}(\mu) $. Note that there are infinitely many probability measures $ \nu $ (such as every measure $ \frac{1}{\lambda(E)}\chi_E d\lambda $ where $ E \subset \mathbb{R}^d$ with the finite Lebesgue measure $ \lambda(E) $, every finite discrete measure $ \frac{1}{n}\sum_{a=1}^n \delta_a $ where $ \delta_a $ denotes the Dirac measure at the point $ a $, every invariant measure obtained from an iterated function system, and others), so $ \mathcal{B}_{\mu(\mathbb{R}^d)}(\mu) $ is an infinite set.
\end{remark}
\begin{proposition}\label{!p}
 \emph{Let $ \nu $ be a finite measure. Then $ \nu $ is a Bessel measure for every finite measure $ \mu $. In particular, $ \nu \in \mathcal{B}_{\nu(\mathbb{R}^d)}(\mu) $ for all probability measures $ \mu $}.
\end{proposition}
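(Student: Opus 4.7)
The statement is essentially a reformulation of Proposition \ref{p!}: in both propositions the joint hypothesis is that $\mu$ and $\nu$ are finite measures, and the conclusion is that $\nu$ is a Bessel measure for $\mu$. The two propositions differ only in which measure is thought of as ``fixed'' and which one varies, so the plan is to invoke Proposition \ref{p!} and then specialize the bound.

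Concretely, I would first observe that if $\mu$ is any finite Borel measure on $\mathbb{R}^d$, then Proposition \ref{p!} applied to the pair $(\mu,\nu)$ immediately gives $\nu \in \mathcal{B}_{\mu(\mathbb{R}^d)\nu(\mathbb{R}^d)}(\mu)$. If one prefers a self-contained argument, I would just repeat the one-line H\"older estimate: for $f\in L^2(\mu)$ and $t\in\mathbb{R}^d$,
\begin{equation*}
|\langle f, e_t\rangle| \leq \int_{\mathbb{R}^d} |f(x)|\,d\mu(x) \leq \mu(\mathbb{R}^d)^{1/2}\|f\|_{L^2(\mu)},
\end{equation*}
and then integrate the square against $d\nu(t)$ to obtain
\begin{equation*}
\int_{\mathbb{R}^d}|\widehat{fd\mu}(t)|^2\,d\nu(t) \leq \mu(\mathbb{R}^d)\,\nu(\mathbb{R}^d)\,\|f\|_{L^2(\mu)}^2.
\end{equation*}

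For the ``in particular'' clause I would use the hypothesis $\mu(\mathbb{R}^d)=1$, which collapses the bound $\mu(\mathbb{R}^d)\nu(\mathbb{R}^d)$ to $\nu(\mathbb{R}^d)$; this gives $\nu\in\mathcal{B}_{\nu(\mathbb{R}^d)}(\mu)$ as claimed. There is no real obstacle here---the proposition is a direct corollary of Proposition \ref{p!} by the symmetric role of $\mu$ and $\nu$ in the H\"older bound---so the only thing to be careful about is recording the bound in the form $\nu(\mathbb{R}^d)$ rather than $\mu(\mathbb{R}^d)\nu(\mathbb{R}^d)$ when $\mu$ is a probability measure, since this is the form that will be convenient for later use.
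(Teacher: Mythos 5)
Your proof is correct and matches the paper's: the paper simply notes that the proof is the same H\"older estimate as in Proposition \ref{p!}, giving the bound $\mu(\mathbb{R}^d)\nu(\mathbb{R}^d)$, which collapses to $\nu(\mathbb{R}^d)$ when $\mu$ is a probability measure. Nothing further is needed.
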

\begin{proof}
The proof is similar to the proof of Proposition \ref{p!}. 
\end{proof}
\begin{corollary}\label{MK}
\emph{A finite measure $ \nu $ is a Bessel measure for a finite measure $ \mu $, if and only if $ \mu $ is a Bessel measure for $ \nu $. Consequently, every finite measure $ \mu $ is a Bessel measure to itself}.
\end{corollary}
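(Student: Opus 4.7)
The plan is to observe that this corollary is purely a bookkeeping consequence of Propositions \ref{p!} and \ref{!p}, which are symmetric statements with respect to the roles of $\mu$ and $\nu$. So no new estimate is required: the entire content lies in the symmetry of the Bessel-measure condition between two finite measures.

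For the biconditional, I would argue as follows. Suppose $\nu$ is a finite Bessel measure for the finite measure $\mu$. Since $\nu$ itself is a finite (hence finite Borel) measure, Proposition \ref{p!} applies with the roles of $\mu$ and $\nu$ interchanged: taking $\nu$ as the ``base'' finite measure and $\mu$ as the ``candidate'' finite measure, the proposition immediately yields $\mu \in \mathcal{B}_{\nu(\mathbb{R}^d)\mu(\mathbb{R}^d)}(\nu)$, that is, $\mu$ is a Bessel measure for $\nu$. The reverse implication is the same argument with the roles swapped again (or equivalently an appeal to Proposition \ref{!p}); alternatively one can note that Propositions \ref{p!} and \ref{!p} already say that \emph{any} pair of finite measures satisfies the Bessel relation in both directions, so the biconditional is vacuously true.

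For the ``consequently'' clause, I would simply specialize to $\nu = \mu$. Since $\mu$ is a finite measure, applying Proposition \ref{p!} with the two measures both equal to $\mu$ gives $\mu \in \mathcal{B}_{\mu(\mathbb{R}^d)^2}(\mu)$, i.e., $\mu$ is a Bessel measure to itself (with explicit Bessel bound $\mu(\mathbb{R}^d)^2$).

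There is no real obstacle here; the only thing to be careful about is to cite the correct proposition in each direction and to record that the Bessel bound produced is the symmetric product $\mu(\mathbb{R}^d)\nu(\mathbb{R}^d)$, which is visibly invariant under exchange of $\mu$ and $\nu$, making the symmetry transparent.
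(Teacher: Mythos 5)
Your proposal is correct and matches the paper's proof, which likewise deduces the corollary directly from Propositions \ref{p!} and \ref{!p} (indeed, as you note, those propositions make the Bessel relation hold in both directions for any pair of finite measures, so the biconditional and the self-Bessel claim with bound $\mu(\mathbb{R}^d)\nu(\mathbb{R}^d)$, resp.\ $\mu(\mathbb{R}^d)^2$, follow immediately). No gaps; your write-up just spells out the symmetry the paper leaves implicit.
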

\begin{proof}
The conclusion follows from Propositions \ref{p!} and \ref{!p}.
\end{proof}

(see also the extended form of the above assertions in our recent work \cite{19})

\begin{proposition}[\cite{3}]
\emph{Let $ \mu $ be a finite measure and let $ B $ be a positive constant. Then there exists a Bessel measure $ \nu $ for $ \mu $ which is not necessarily finite}.
\end{proposition}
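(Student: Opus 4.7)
The plan is to reduce to Proposition~\ref{p!} via a simple rescaling. First I would invoke Proposition~\ref{p!} (or equivalently Remark~\ref{RE}): every probability measure $\rho$ on $\mathbb{R}^d$ satisfies $\rho\in\mathcal{B}_{\mu(\mathbb{R}^d)}(\mu)$, so
\[
\int_{\mathbb{R}^d} |\widehat{fd\mu}(t)|^2\,d\rho(t) \;\leq\; \mu(\mathbb{R}^d)\,\|f\|_{L^2(\mu)}^2 \qquad \bigl(f\in L^2(\mu)\bigr).
\]
Next I would pick any such $\rho$ (Remark~\ref{RE} lists infinitely many natural choices, e.g.\ $\rho=\tfrac{1}{\lambda(E)}\chi_E\,d\lambda$ for a bounded set $E$) and set $\nu := \tfrac{B}{\mu(\mathbb{R}^d)}\,\rho$. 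Multiplying the displayed inequality by the scaling constant $B/\mu(\mathbb{R}^d)$ immediately yields $\int |\widehat{fd\mu}|^2\,d\nu \leq B\,\|f\|_{L^2(\mu)}^2$, so $\nu\in\mathcal{B}_B(\mu)$, as required.

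The clause \emph{not necessarily finite} is only emphasising that one need not constrain $\nu$ to be a finite measure. The particular $\nu$ produced above is finite (of total mass $B/\mu(\mathbb{R}^d)$), but infinite Bessel measures with bound $B$ exist as well: if, for instance, $\mu = g\,d\lambda$ with $g\in L^\infty(\lambda)$, then Plancherel's theorem gives $\int |\widehat{fd\mu}(t)|^2\,d\lambda(t) = \|fg\|_{L^2(\lambda)}^2 \leq \|g\|_\infty\|f\|_{L^2(\mu)}^2$, so $\tfrac{B}{\|g\|_\infty}\,\lambda$ is an infinite Bessel measure for $\mu$ with bound $B$.

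I do not anticipate a real obstacle: once Proposition~\ref{p!} is in hand, the whole proof reduces to a one-line linear rescaling. The only point worth double-checking is that the bound from Proposition~\ref{p!} truly comes out as $\mu(\mathbb{R}^d)\nu(\mathbb{R}^d)$, which is already secured by the Hölder step in its proof.
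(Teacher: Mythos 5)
Your argument is correct and takes essentially the paper's approach: the paper's proof simply takes $\nu=\sum_{\lambda\in\Lambda}c_\lambda\delta_\lambda$ with $\sum_{\lambda\in\Lambda}c_\lambda\le B/\mu(\mathbb{R}^d)$ and applies the same H\"older estimate underlying Proposition~\ref{p!}, which is exactly your rescaled-probability-measure device specialized to a discrete $\rho$. One caution on your side remark: the paper's own $\nu$ is also a finite measure, so nothing beyond your first paragraph is needed for the loosely worded ``not necessarily finite'' clause, and the claim that genuinely infinite Bessel measures with bound $B$ exist should not be made for general finite $\mu$ (for $\mu=\delta_0$ every Bessel measure has total mass at most $B$), although your Plancherel example is correct in the special case $\mu=g\,d\lambda$ with bounded $g$.
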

\begin{proof}
For a countable set $ \Lambda\subset\mathbb{R}^d $ let $ \nu=\sum_{\lambda\in\Lambda}c_\lambda\delta_\lambda $ such that $ \sum_{\lambda\in\Lambda}c_\lambda \leq \dfrac{B}{\mu(\mathbb{R}^d)} $. Then by applying Holder's inequality one can obtain
\begin{equation*}
\int_{\mathbb{R}^d} |\left<f, e_t \right>|^2 d\nu(t) \leq \sum_{\lambda\in\Lambda}c_\lambda \|f\|^2_{L^2(\mu)} \mu(\mathbb{R}^d)\leq B\|f\|^2_{L^2(\mu)} \quad \text{for all }  f\in L^2(\mu).
\end{equation*}
\end{proof}
\begin{proposition}[\cite{3}]\label{Aj}
\emph{If $ \nu $ is a Bessel measure for a finite measure $ \mu $, then $ \nu $ is a $ \sigma $-finite measure}.
\end{proposition}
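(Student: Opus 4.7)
The plan is to exploit the Bessel inequality applied to a cleverly chosen family of test functions — namely, the exponentials $e_{t_0}$ themselves — to obtain a uniform bound on the $\nu$-measure of small balls, and then invoke the $\sigma$-compactness of $\mathbb{R}^d$.

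First I would observe that for any $t_0\in\mathbb{R}^d$ the function $e_{t_0}$ belongs to $L^2(\mu)$ (since $|e_{t_0}|\equiv 1$ and $\mu$ is finite), with $\|e_{t_0}\|_{L^2(\mu)}^2=\mu(\mathbb{R}^d)$. A direct computation gives $\widehat{e_{t_0}d\mu}(t)=\hat{\mu}(t-t_0)$. Plugging $f=e_{t_0}$ into the Bessel inequality for $\nu$ yields
\begin{equation*}
\int_{\mathbb{R}^d} |\hat{\mu}(t-t_0)|^2\, d\nu(t) \leq B\,\mu(\mathbb{R}^d),
\end{equation*}
with the right-hand side independent of $t_0$.

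Next, since $\mu$ is a finite measure, $\hat{\mu}$ is continuous and satisfies $\hat{\mu}(0)=\mu(\mathbb{R}^d)$. Thus there is a radius $r>0$ (depending only on $\mu$) such that $|\hat{\mu}(s)|^2 \geq \tfrac{1}{2}\mu(\mathbb{R}^d)^2$ whenever $\|s\|<r$. Restricting the integral above to $t\in B(t_0,r)$ and using this pointwise lower bound, I obtain
\begin{equation*}
\tfrac{1}{2}\mu(\mathbb{R}^d)^2\,\nu\bigl(B(t_0,r)\bigr) \leq B\,\mu(\mathbb{R}^d),
\quad\text{so}\quad
\nu\bigl(B(t_0,r)\bigr)\leq \frac{2B}{\mu(\mathbb{R}^d)}.
\end{equation*}
This bound is uniform in $t_0$.

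Finally, $\mathbb{R}^d$ is $\sigma$-compact and can be covered by a countable collection of balls of radius $r$, each of finite $\nu$-measure by the previous step. Hence $\nu$ is $\sigma$-finite. The only place I needed $\mu$ to be finite (beyond the hypothesis) was to guarantee continuity of $\hat{\mu}$ and the value $\hat{\mu}(0)=\mu(\mathbb{R}^d)>0$; this is the step that powers the uniform local bound and is really the substantive part of the argument. The remaining work — covering $\mathbb{R}^d$ by countably many translated balls — is routine.
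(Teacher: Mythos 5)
Your argument is correct, and it is essentially the standard proof: the present paper does not prove this proposition at all (it is quoted from \cite{3}), and the argument in that reference is the same one you give — test the Bessel inequality on the exponentials $e_{t_0}$, note $\widehat{e_{t_0}d\mu}(t)=\widehat{\mu}(t-t_0)$, use continuity of $\widehat{\mu}$ and $\widehat{\mu}(0)=\mu(\mathbb{R}^d)$ to obtain the uniform bound $\nu(B(t_0,r))\leq 2B/\mu(\mathbb{R}^d)$ (which in fact shows $\nu$ is translation bounded, a stronger conclusion), and then cover $\mathbb{R}^d$ by countably many such balls. The only implicit hypothesis is $\mu(\mathbb{R}^d)>0$: for the zero measure every $\nu$ is trivially Bessel and the conclusion fails, so this nondegeneracy assumption is understood in the statement and your use of it is harmless.
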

\begin{proposition}\label{11.3}
\emph{If a $ \sigma $-finite measure $ \nu $ is a Bessel measure for $ \mu_1 $, $ \mu_2 $, then $ \nu $ is a Bessel measure for $ \mu_1 + \mu_2 $}.
\end{proposition}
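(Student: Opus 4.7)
The plan is to follow directly from the definition together with the elementary inequality $|a+b|^2 \leq 2(|a|^2+|b|^2)$ applied to the pointwise identity for Fourier transforms under addition of measures.

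First, I would fix $f \in L^2(\mu_1+\mu_2)$ and record the identity
\begin{equation*}
\|f\|_{L^2(\mu_1+\mu_2)}^2 = \|f\|_{L^2(\mu_1)}^2 + \|f\|_{L^2(\mu_2)}^2,
\end{equation*}
which in particular forces $f \in L^2(\mu_1) \cap L^2(\mu_2)$, so that the Bessel bounds $B_1, B_2$ for $\nu$ with respect to $\mu_1, \mu_2$ may actually be applied to $f$. Next, by linearity of the integral defining the Fourier transform in Definition 2.2, one has
\begin{equation*}
\widehat{f\,d(\mu_1+\mu_2)}(t) = \widehat{f\,d\mu_1}(t) + \widehat{f\,d\mu_2}(t) \quad (t \in \mathbb{R}^d).
\end{equation*}

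Then I would apply the inequality $|a+b|^2 \leq 2(|a|^2+|b|^2)$ pointwise in $t$, integrate against $\nu$ (this is where $\sigma$-finiteness ensures the integrals are well-defined, as granted by Proposition \ref{Aj}), and invoke the two Bessel estimates to conclude
\begin{equation*}
\int_{\mathbb{R}^d} |\widehat{f\,d(\mu_1+\mu_2)}(t)|^2 \, d\nu(t) \leq 2B_1 \|f\|_{L^2(\mu_1)}^2 + 2B_2 \|f\|_{L^2(\mu_2)}^2 \leq 2\max(B_1,B_2) \, \|f\|_{L^2(\mu_1+\mu_2)}^2,
\end{equation*}
using the identity from the first step in the last inequality. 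Hence $\nu$ is a Bessel measure for $\mu_1+\mu_2$ with bound $2\max(B_1,B_2)$.

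There is no real obstacle here: the argument is essentially a two-line manipulation. The only care needed is to notice that $f \in L^2(\mu_1+\mu_2)$ automatically lies in both $L^2(\mu_j)$, so that applying the two Bessel hypotheses is legitimate; and to remember that the constant $2$ must appear because the Fourier transforms need not be orthogonal in $L^2(\nu)$, so one cannot in general obtain the sharper bound $\max(B_1,B_2)$ without further assumptions.
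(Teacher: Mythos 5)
Your proof is correct and follows essentially the same route as the paper: both expand $\widehat{f\,d(\mu_1+\mu_2)}=\widehat{f\,d\mu_1}+\widehat{f\,d\mu_2}$, note $\|f\|_{L^2(\mu_1+\mu_2)}^2=\|f\|_{L^2(\mu_1)}^2+\|f\|_{L^2(\mu_2)}^2$, and apply the two Bessel bounds. The only difference is cosmetic: the paper handles the cross term with H\"older's inequality in $L^2(\nu)$, yielding the bound $(\sqrt{B_1}+\sqrt{B_2})^2$, while your use of $|a+b|^2\leq 2(|a|^2+|b|^2)$ gives $2\max(B_1,B_2)$; both constants are acceptable (and neither dominates the other in general).
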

\begin{proof}
Let $ B_1 $, $ B_2 $ be the Bessel bound for $ \nu $ (associated to $\mu_1 $, $ \mu_2 $ respectively). If we apply Holder's inequality, then for all $ f \in L^2( \mu_1 + \mu_2) $,
\begin{align*}
\int_{\mathbb{R}^d} |\widehat{fd(\mu_1 + \mu_2)}|^2 d\nu &\leq B_1 \|f\|^2_{\mu_1} + B_2 \|f\|^2_{\mu_2} + 2\sqrt{B_1 B_2} \|f\|_{\mu_1} \|f\|_{\mu_2}\\
&\leq (\sqrt{B_1} + \sqrt{B_2})^2 \|f\|^2_{\mu_1 + \mu_2}.
\end{align*}
Thus, the assertion follows.
\end{proof}
Note that when $\mu_1 $, $\mu_2$ and $ \nu $ are finite measures, by Proposition \ref{p!} there exists a Bessel bound $ (\mu_1 + \mu_2)(\mathbb{R}^d)\nu(\mathbb{R}^d) $ for $ \nu $.

Now the question is whether there is a connection between the existence of a frame measure for $ \mu $, $ \mu' $ and the sum $ \mu + \mu' $. We give the following lemma from \cite{6} (see also Proposition \ref{frb}).
\begin{lemma}[\cite{6}]
\emph{Let $ \mu $, $ \mu' $ be Borel measures. Suppose that $ \mu'(K_{\mu}) = 0 $ ($ K_{\mu} $ is the smallest closed set such that $ \mu(K) = \mu(\mathbb{R}^d $)). If $ \nu $ is a frame measure for $ \mu + \mu' $, then $ \nu $ is a frame measure for $ \mu $ and $ \mu' $ with the same frame bounds}.
\end{lemma}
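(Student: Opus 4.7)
The plan is to exploit the fact that the hypothesis $\mu'(K_\mu)=0$, combined with the defining property $\mu(\mathbb{R}^d\setminus K_\mu)=0$, expresses the mutual singularity of $\mu$ and $\mu'$ via the Borel (in fact closed) set $K_\mu$. This lets me lift a test function for $\mu$ (respectively, $\mu'$) to a test function for $\mu+\mu'$ by zero-extension off $K_\mu$ (respectively, off $K_\mu^c$), apply the assumed frame inequality for $\nu$ relative to $\mu+\mu'$ to the lift, and read off the frame inequality for $\mu$ and $\mu'$ with the same constants $A,B$.

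For the $\mu$ side: given $f\in L^2(\mu)$, I would set $\tilde f := f\cdot\chi_{K_\mu}$, which is Borel measurable. Two identities drive the argument. On the norm side,
\[
\|\tilde f\|_{L^2(\mu+\mu')}^2 = \int|\tilde f|^2\,d\mu + \int|\tilde f|^2\,d\mu' = \|f\|_{L^2(\mu)}^2 + 0,
\]
where the first integral equals $\|f\|_{L^2(\mu)}^2$ because $\mu$ is concentrated on $K_\mu$, and the second vanishes because its integrand is supported on the $\mu'$-null set $K_\mu$. On the Fourier side, the same pair of null-set arguments applied to the integrand $\tilde f\,e_{-t}$ yields
\[
\widehat{\tilde f\,d(\mu+\mu')}(t)=\widehat{f\,d\mu}(t)\qquad(t\in\mathbb{R}^d).
\]
Plugging $\tilde f$ into the frame inequality for $\nu$ relative to $\mu+\mu'$ therefore produces the frame inequality for $\mu$ with the identical bounds $A,B$. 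The symmetric argument with $g\in L^2(\mu')$ and $\tilde g := g\cdot\chi_{\mathbb{R}^d\setminus K_\mu}$ (now using $\mu(\mathbb{R}^d\setminus K_\mu)=0$ to kill the $\mu$-integrals and $\mu'(K_\mu)=0$ to identify the $\mu'$-integrals of $\tilde g$ with those of $g$) gives the frame inequality for $\mu'$, again with the same $A,B$.

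There is no real obstacle; the proof is essentially bookkeeping, with the single conceptual observation that the hypotheses split the support of $\mu+\mu'$ into the two disjoint Borel pieces $K_\mu$ and $K_\mu^c$, each carrying exactly one of $\mu,\mu'$. The only point requiring mild care is checking that the zero-extensions $\tilde f$ and $\tilde g$ are genuine elements of $L^2(\mu+\mu')$ representing $f$ and $g$ on the respective supports, which is immediate from the null-set conditions displayed above.
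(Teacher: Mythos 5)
The paper gives no proof of this lemma at all---it is quoted from \cite{6} as a known result---so there is no internal argument to compare against; judged on its own, your proof is correct and is the standard argument. The hypotheses $\mu'(K_\mu)=0$ and $\mu(\mathbb{R}^d\setminus K_\mu)=0$ make $\mu$ and $\mu'$ mutually singular along the closed set $K_\mu$, and your zero-extensions $\tilde f=f\chi_{K_\mu}$ and $\tilde g=g\chi_{\mathbb{R}^d\setminus K_\mu}$ embed $L^2(\mu)$ and $L^2(\mu')$ isometrically into $L^2(\mu+\mu')$ while preserving the transforms, $\widehat{\tilde f\,d(\mu+\mu')}=\widehat{f\,d\mu}$ and $\widehat{\tilde g\,d(\mu+\mu')}=\widehat{g\,d\mu'}$, so the frame inequality for $\mu+\mu'$ restricts to each piece with the identical bounds $A,B$. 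The only point worth flagging is that the ``defining property'' $\mu(\mathbb{R}^d\setminus K_\mu)=0$ follows from the stated description of $K_\mu$ (smallest closed set with $\mu(K)=\mu(\mathbb{R}^d)$) only when $\mu$ is finite; that is the intended setting here and in \cite{6}, but it deserves a word, since for an infinite measure the displayed definition would not by itself force the complement to be null.
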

\begin{proposition}\label{3.1}
\emph{Let $ \mu $ be a Borel measure supported on $ F\subseteq\mathbb{R}^d $ and $\nu \in \mathcal{F}_{A,B}(\mu)$. If $ E\subseteq F $ and $ 0 < m \leq \phi(x)\leq M <\infty $ $ \mu $-a.e. on $ E $, then $ \nu $ is a frame measure for $ \mu'=\chi_{E} \phi d\mu$. More precisely, $\nu \in \mathcal{F}_{mA,MB}(\mu')$}.
\end{proposition}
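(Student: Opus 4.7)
The plan is to transport a function $f\in L^2(\mu')$ to a function $g\in L^2(\mu)$ whose $\mu$-Fourier transform is the $\mu'$-Fourier transform of $f$, then apply the frame inequality for $\nu$ and $\mu$ and convert the $L^2(\mu)$-norms back to $L^2(\mu')$-norms via the two-sided bound on $\phi$.

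Concretely, given $f\in L^2(\mu')$ I would set $g=\chi_E\phi f$. The identity $g\,d\mu=\chi_E\phi f\,d\mu = f\,d\mu'$ holds by the definition of $\mu'$, so that
\begin{equation*}
\widehat{f\,d\mu'}(t)=\widehat{g\,d\mu}(t)\qquad(t\in\mathbb{R}^d).
\end{equation*}
The key elementary step is to compare the two $L^2$-norms. Because $0<m\le\phi\le M$ $\mu$-a.e.\ on $E$, one has $m\phi\le\phi^2\le M\phi$ on $E$, and therefore
\begin{equation*}
m\|f\|_{L^2(\mu')}^2 \;=\; m\!\int_E |f|^2\phi\,d\mu \;\le\; \int_E |f|^2\phi^2\,d\mu \;=\;\|g\|_{L^2(\mu)}^2 \;\le\; M\!\int_E |f|^2\phi\,d\mu \;=\; M\|f\|_{L^2(\mu')}^2.
\end{equation*}
In particular $g\in L^2(\mu)$, so the hypothesis $\nu\in\mathcal{F}_{A,B}(\mu)$ may be invoked.

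Applying the frame inequality for $\nu$ and $\mu$ to $g$ then gives
\begin{equation*}
A\|g\|_{L^2(\mu)}^2 \;\le\; \int_{\mathbb{R}^d}\bigl|\widehat{g\,d\mu}(t)\bigr|^2 d\nu(t) \;=\; \int_{\mathbb{R}^d}\bigl|\widehat{f\,d\mu'}(t)\bigr|^2 d\nu(t) \;\le\; B\|g\|_{L^2(\mu)}^2,
\end{equation*}
and sandwiching with the two-sided estimate above yields exactly
\begin{equation*}
mA\|f\|_{L^2(\mu')}^2 \;\le\; \int_{\mathbb{R}^d}\bigl|\widehat{f\,d\mu'}(t)\bigr|^2 d\nu(t) \;\le\; MB\|f\|_{L^2(\mu')}^2,
\end{equation*}
which is the claim $\nu\in\mathcal{F}_{mA,MB}(\mu')$. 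There is no real obstacle here; the only point that needs a touch of care is the correct placement of the factor $\phi$ (a single $\phi$ inside the weighted $L^2(\mu')$-norm vs.\ $\phi^2$ when pulling the weight back into $L^2(\mu)$), which is precisely why the bounds $m,M$ on $\phi$, not $m^2,M^2$, appear as the multiplicative constants.
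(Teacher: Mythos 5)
Your proposal is correct and follows essentially the same route as the paper's own proof: substitute $g=\chi_E\phi f$ so that $\widehat{f\,d\mu'}=\widehat{g\,d\mu}$, apply the frame inequality for $\nu\in\mathcal{F}_{A,B}(\mu)$ to $g$, and use $m\phi\le\phi^2\le M\phi$ on $E$ to convert norms, yielding the bounds $mA$ and $MB$. Your explicit two-sided sandwich also cleanly justifies $g\in L^2(\mu)$ for $f\in L^2(\mu')$, a point the paper treats a bit more loosely, but the argument is the same.
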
 
\begin{proof}
 Since $ \nu $ is a frame measure for $ \mu $, for every $f \in L^2 (\mu)$,  
\begin{equation*}
A\| f\|^2_{L^2(\mu)} \leq \int_{\mathbb{R}^d} \left|\int_{\mathbb{R}^d} f(x) e_{-t}(x) d\mu(x) \right|^2 d\nu(t)\leq B\| f\|^2_{L^2(\mu)} .
\end{equation*}
In addition, for every $f \in L^2 (\mu)$ we have $\chi_{E}\phi f \in L^2 (\mu)$, since
\begin{equation*}
\int_{\mathbb{R}^d} |\chi_{E}(x)\phi(x)f(x)|^2 d\mu(x) = \int_{E} |\phi(x)|^2|f(x)|^2 d\mu(x)\leq M^2\int_{\mathbb{R}^d}|f(x)|^2 d\mu(x) < \infty.
 \end{equation*}
We have 
\begin{align*}
\int_{\mathbb{R}^d} \left|\int_{\mathbb{R}^d} f(x) e_{-t}(x) d\mu'(x) \right|^2 d\nu(t) & = \int_{\mathbb{R}^d} \left|\int_{\mathbb{R}^d} f(x) e_{-t}(x) \chi_{E}(x)\phi(x) d\mu(x) \right|^2 d\nu(t)\\
& \leq B \int_{\mathbb{R}^d} |\chi_{E}(x)\phi(x) f(x) |^2 d\mu(x)\\
& \leq BM \int_{\mathbb{R}^d} |f(x)|^2 \chi_{E}(x)\phi(x) d\mu(x) \\
&= BM \| f\|^2_{L^2(\mu')}.
\end{align*}
Analogously, we obtain the lower bound and consequently
\begin{equation*}
Am\| f\|^2_{L^2(\mu')} \leq \int_{\mathbb{R}^d} \left|\int_{\mathbb{R}^d} f(x) e_{-t}(x) d\mu'(x) \right|^2 d\nu(t) \leq BM\| f\|^2_{L^2(\mu')}.
\end{equation*}
\end{proof}
\begin{corollary}\label{Cco}
\emph{Let $ \mu $ be a Borel measure and let $\nu $ be a Bessel/frame measure for $\mu$. Then for any $ E\subset \mathrm{supp}\mu $, the measure $ \mu'=\chi_{E}d\mu $ admits $ \nu $ as a Bessel/frame measure with the same bound(s)}. 
\end{corollary}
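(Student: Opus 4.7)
The plan is to derive this corollary as an immediate specialization of Proposition \ref{3.1}, taking the weight $\phi$ to be identically $1$ on $E$. First I would observe that since $E \subset \mathrm{supp}\mu$ and $\mathrm{supp}\mu \subseteq F$ (for any $F$ that supports $\mu$), the hypothesis $E \subseteq F$ of Proposition \ref{3.1} is satisfied. Then set $\phi(x) = 1$ for $x \in E$, so that trivially $0 < 1 \leq \phi(x) \leq 1 < \infty$ on $E$, i.e.\ we may take $m = M = 1$. With these choices the weighted measure in Proposition \ref{3.1} becomes $\chi_E \phi\, d\mu = \chi_E\, d\mu = \mu'$.

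For the frame case, Proposition \ref{3.1} then directly yields $\nu \in \mathcal{F}_{mA,MB}(\mu') = \mathcal{F}_{A,B}(\mu')$, so $\nu$ is a frame measure for $\mu'$ with exactly the same bounds $A,B$ as it had for $\mu$. For the Bessel case, I would point out that only the upper-bound calculation in the proof of Proposition \ref{3.1} is needed, and that calculation does not use the lower frame bound at all: starting from the Bessel inequality $\int |\widehat{gd\mu}(t)|^2 d\nu(t) \leq B\|g\|_{L^2(\mu)}^2$ applied to $g = \chi_E f$, one gets
\begin{equation*}
\int_{\mathbb{R}^d} \left|\widehat{fd\mu'}(t)\right|^2 d\nu(t) = \int_{\mathbb{R}^d} \left|\widehat{(\chi_E f)\, d\mu}(t)\right|^2 d\nu(t) \leq B \|\chi_E f\|_{L^2(\mu)}^2 = B\|f\|_{L^2(\mu')}^2,
\end{equation*}
so $\nu$ is Bessel for $\mu'$ with the same bound $B$.

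There is essentially no obstacle here: the entire content of the corollary is repackaging Proposition \ref{3.1} in the trivial weight case, together with the observation that the upper-bound half of that proof is self-contained and handles the Bessel situation. The only minor point worth being explicit about is that $\chi_E f \in L^2(\mu)$ whenever $f \in L^2(\mu') = L^2(\chi_E\, d\mu)$ (with equal norms), which makes the substitution $g = \chi_E f$ legitimate and shows that the identity $L^2(\mu')$-norm appears naturally on the right side.
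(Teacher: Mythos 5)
Your proposal is correct and follows exactly the route the paper intends: the corollary is stated without a separate proof precisely because it is Proposition \ref{3.1} specialized to $\phi\equiv 1$, $m=M=1$, which gives $\nu\in\mathcal{F}_{A,B}(\chi_E\,d\mu)$ with unchanged bounds. Your additional remark that the Bessel case uses only the upper-bound half of that argument (which never invokes the lower frame bound), together with the norm identity $\|\chi_E f\|_{L^2(\mu)}=\|f\|_{L^2(\mu')}$, is exactly the implicit content the paper relies on.
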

\begin{proposition}\label{SA}
\emph{Let $ E\subseteq [0, 1]^d $ and $ 0 < m \leq \phi(x)\leq M <\infty $ $ \lambda $-a.e. on $ E $. Then the measure $ \nu=\sum_{t\in \mathbb{Z}^d} \delta_t $ is a Plancherel measure for $ \mu=\chi_{E}d\lambda $ and a frame measure for $ \mu'=\chi_{E}\phi d\lambda $. Precisely, $ \nu \in \mathcal{F}_{1,1}(\mu) $ and $ \nu \in \mathcal{F}_{m,M}(\mu') $}.
\end{proposition}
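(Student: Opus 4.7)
The plan is to prove the two assertions in order, exploiting the fact that $E$ lies inside the unit cube so that classical Fourier series theory applies directly, and then reduce the weighted case to the unweighted case via Proposition \ref{3.1}.

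First I would handle the Plancherel claim. For any $f \in L^2(\mu)$ extend $f$ by zero off $E$; then $f \chi_E \in L^2([0,1]^d, \lambda)$ with $\|f\chi_E\|_{L^2([0,1]^d)} = \|f\|_{L^2(\mu)}$. For $t \in \mathbb{Z}^d$ I would identify
\begin{equation*}
\widehat{fd\mu}(t) = \int_{\mathbb{R}^d} f(x) e_{-t}(x)\, \chi_E(x)\, d\lambda(x) = \langle f\chi_E, e_t \rangle_{L^2([0,1]^d)}.
\end{equation*}
The standard fact that $\{e_t : t \in \mathbb{Z}^d\}$ is an orthonormal basis for $L^2([0,1]^d, \lambda)$ then gives, by Parseval's identity,
\begin{equation*}
\int_{\mathbb{R}^d} |\widehat{fd\mu}(t)|^2\, d\nu(t) = \sum_{t \in \mathbb{Z}^d} |\langle f\chi_E, e_t\rangle|^2 = \|f\chi_E\|^2_{L^2([0,1]^d)} = \|f\|^2_{L^2(\mu)},
\end{equation*}
which shows $\nu \in \mathcal{F}_{1,1}(\mu)$.

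Next I would derive the second assertion by invoking Proposition \ref{3.1}. Since $\mu = \chi_E d\lambda$ is supported on $E \subseteq [0,1]^d$, $\nu \in \mathcal{F}_{1,1}(\mu)$, and $\phi$ satisfies $0 < m \le \phi(x) \le M < \infty$ $\mu$-a.e.\ on $E$, Proposition \ref{3.1} applies with bounds $A = B = 1$, producing $\nu \in \mathcal{F}_{m\cdot 1,\, M\cdot 1}(\mu') = \mathcal{F}_{m,M}(\mu')$ for $\mu' = \chi_E \phi\, d\mu = \chi_E \phi\, d\lambda$, as desired.

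There is no serious obstacle here; the only point requiring care is the identification of $\widehat{fd\mu}(t)$ at integer points $t$ with the Fourier coefficients of $f\chi_E$ viewed as a function on the unit cube, which is precisely why the hypothesis $E \subseteq [0,1]^d$ appears. Once that is observed, the Plancherel assertion is Parseval for the standard Fourier basis, and the weighted case is an immediate corollary of Proposition \ref{3.1}.
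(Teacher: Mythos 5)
Your proof is correct and follows essentially the same route as the paper: Parseval's identity for the orthonormal basis $\{e_t\}_{t\in\mathbb{Z}^d}$ of $L^2([0,1]^d)$ gives the Plancherel assertion, and Proposition \ref{3.1} handles the weight $\phi$. The only cosmetic difference is that you justify the restriction to $E$ directly by zero-extension, whereas the paper first treats $\mu=\chi_{[0,1]^d}d\lambda$ and then cites Corollary \ref{Cco}; the underlying computation is the same.
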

\begin{proof}
Since $ \{e_t\}_{t \in \mathbb{Z}^d} $ is an orthonormal basis for $ L^2([0, 1]^d) $, 
\begin{equation*}
\sum_{t\in \mathbb{Z}^d}|\left<f, e_t \right>|^2 =\int_{[0, 1]^d} |f(x)|^2 d\lambda(x) \quad \text{for all }  f\in L^2([0, 1]^d).
\end{equation*}
Considering $ \mu = \chi_{\{[0, 1]^d\}}d\lambda $ on $ \mathbb{R}^d $, we have for all $ f\in L^2(\mu) $,
\begin{equation*}
\int_{\mathbb{R}^d} |\left<f, e_y \right>_{L^2(\mu)}|^2 d\nu(y)= \sum_{t\in \mathbb{Z}^d}|\left<f, e_t \right>_{L^2(\mu)}|^2 = \int_{\mathbb{R}^d} |f(x)|^2 d\mu(x).
\end{equation*}
Then the assertion follows from Proposition \ref{3.1} and Corollary \ref{Cco}.
\end{proof}
\begin{proposition}\label{3.2}
\emph{Let $ F\subseteq \mathbb{R}^d $ and $ 0 < m \leq \phi(x)\leq M<\infty $ $ \lambda $-a.e. on $ F $. Then  $ \lambda $ is a Plancherel measure for $ \mu=\chi_{F}d\lambda $ and a frame measure for $ \mu'=\chi_{F}\phi d\lambda $. Precisely, $ \lambda \in \mathcal{F}_{1,1}(\mu) $ and $ \lambda \in \mathcal{F}_{m,M}(\mu') $}.
\end{proposition}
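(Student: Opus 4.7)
The plan is to reduce both assertions to the classical Plancherel theorem for the Fourier transform on $L^2(\mathbb{R}^d,\lambda)$, combined with Proposition \ref{3.1}. The first key observation is that the multiplication map $f \mapsto \chi_F f$ is an isometric embedding of $L^2(\mu)=L^2(\chi_F d\lambda)$ into $L^2(\mathbb{R}^d,\lambda)$, since
\begin{equation*}
\int_{\mathbb{R}^d}|\chi_F(x) f(x)|^2 \, d\lambda(x) = \int_F |f(x)|^2 \, d\lambda(x) = \|f\|^2_{L^2(\mu)}.
\end{equation*}
Under this embedding, the Fourier transform appearing in the frame measure inequality matches the classical one:
\begin{equation*}
\widehat{fd\mu}(t) = \int_{\mathbb{R}^d} \chi_F(x) f(x) e_{-t}(x)\, d\lambda(x) = \widehat{\chi_F f}(t),
\end{equation*}
interpreted in the $L^2$ sense when $\chi_F f$ fails to be integrable.

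Next I would invoke Plancherel's theorem, which asserts that the Fourier transform is a unitary operator on $L^2(\mathbb{R}^d,\lambda)$. This yields
\begin{equation*}
\int_{\mathbb{R}^d} |\widehat{fd\mu}(t)|^2 \, d\lambda(t) = \int_{\mathbb{R}^d} |\widehat{\chi_F f}(t)|^2 \, d\lambda(t) = \int_{\mathbb{R}^d}|\chi_F f(x)|^2 \, d\lambda(x) = \|f\|^2_{L^2(\mu)}
\end{equation*}
for every $f \in L^2(\mu)$. This is the equality case of (2.1) with $A=B=1$, so $\lambda \in \mathcal{F}_{1,1}(\mu)$.

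For the second claim, I would observe that since $\mu$ is supported on $F$ and agrees with $\lambda$ there, we may rewrite $\mu' = \chi_F \phi\, d\lambda = \chi_F \phi\, d\mu$, so $\mu'$ has exactly the form considered in Proposition \ref{3.1}, with $E=F$, the weight $\phi$ satisfying $0 < m \le \phi \le M < \infty$ $\mu$-a.e.\ on $F$. Applying Proposition \ref{3.1} to the frame measure $\lambda \in \mathcal{F}_{1,1}(\mu)$ obtained above immediately gives $\lambda \in \mathcal{F}_{m\cdot 1,\, M\cdot 1}(\mu') = \mathcal{F}_{m,M}(\mu')$.

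The only delicate point — and even this is minor — is the passage from the $L^1(\mu)$-definition of the Fourier transform in the preliminaries to an $L^2$-meaningful statement when $|F|=\infty$ (so $\mu$ is infinite and $L^2(\mu) \not\subseteq L^1(\mu)$). This is handled by the standard density argument: approximate $f\in L^2(\mu)$ by truncations $f_n$ supported on subsets of $F$ of finite Lebesgue measure, for which $\chi_F f_n \in L^1\cap L^2$, and pass to the limit using the $L^2$-isometry of the Fourier transform. No other step presents any real obstacle.
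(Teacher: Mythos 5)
Your proposal is correct and follows essentially the same route as the paper: the classical Plancherel theorem gives $\lambda\in\mathcal{F}_{1,1}(\lambda)$, and restriction to $F$ together with Proposition \ref{3.1} (the paper phrases the first step as Corollary \ref{Cco}, you unfold it via the isometric embedding $f\mapsto\chi_F f$) yields $\lambda\in\mathcal{F}_{1,1}(\mu)$ and $\lambda\in\mathcal{F}_{m,M}(\mu')$. Your closing remark on the $L^1$-versus-$L^2$ interpretation of $\widehat{fd\mu}$ when $|F|=\infty$ is a reasonable technical point that the paper leaves implicit, but it does not change the argument.
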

\begin{proof}
According to Plancherel's theorem the following equation is satisfied:
\begin{equation*}
\int_{\mathbb{R}^d} \left|\int_{\mathbb{R}^d}f(x)e_{-t}(x)d\lambda(x)\right|^2 d\lambda(t) =\int_{\mathbb{R}^d} |f(x)|^2 d\lambda(x) \quad \text{for all }  f\in L^2(\lambda).
\end{equation*}
Then the assertion follows from Proposition \ref{3.1} and Corollary \ref{Cco}.
\end{proof}
In the following theorem we construct infinitely many measures which admit Lebesgue measure as a frame measure with arbitrary fixed frame bounds $ m,M $. 
\begin{theorem} \label{3.3}
\emph{Lebesgue measure is a frame measure for infinitely many measures which are absolutely continuous with respect to Lebesgue measure}.
\end{theorem}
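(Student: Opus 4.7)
The plan is to reduce the theorem to a direct application of Proposition~\ref{3.2}, which already guarantees that $\lambda \in \mathcal{F}_{m,M}(\mu')$ for any measure of the form $\mu' = \chi_F \phi \, d\lambda$ with $\phi$ bounded between $m$ and $M$ on $F$. The task therefore reduces to exhibiting an infinite family of such pairs $(F, \phi)$ that yield pairwise distinct absolutely continuous measures.

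First I would fix an arbitrary Borel set $F \subseteq \mathbb{R}^d$ with $0 < |F| < \infty$ (for concreteness, $F = [0,1]^d$ works), and fix any constants $0 < m \leq M < \infty$; these will play the role of the intended frame bounds. Next, I would construct an infinite parametric family $\{\phi_\alpha\}$ of measurable functions on $F$, each satisfying $m \leq \phi_\alpha \leq M$ $\lambda$-a.e.\ on $F$, but producing pairwise distinct densities. A convenient choice: pick any nested (or otherwise distinct) sequence of measurable subsets $E_n \subseteq F$ with $|E_n| \neq |E_{n'}|$ for $n \neq n'$, and define
\begin{equation*}
\phi_n(x) = m\,\chi_{F \setminus E_n}(x) + M\,\chi_{E_n}(x).
\end{equation*}
By construction $m \leq \phi_n \leq M$ on $F$, and the resulting measures $\mu_n = \chi_F \phi_n \, d\lambda$ are pairwise distinct because $\mu_n(F) = m\,|F| + (M-m)\,|E_n|$ takes distinct values.

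Finally, I would invoke Proposition~\ref{3.2} for each $n$ to conclude $\lambda \in \mathcal{F}_{m,M}(\mu_n)$, which yields infinitely many absolutely continuous measures for which Lebesgue measure is a frame measure with the prescribed bounds. If desired, one may even emphasize the range of the theorem by noting that varying $F$ (through any increasing family of sets of finite positive Lebesgue measure) produces yet another infinite family, but this is not needed for the stated result.

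Because Proposition~\ref{3.2} does all the analytic work, there is essentially no obstacle to overcome; the only care needed is in choosing the family so that the resulting measures are genuinely distinct (hence the use of sets $E_n$ with distinct Lebesgue measures) and in checking that each $\phi_n$ lies between the positive bounds $m$ and $M$. The proof is thus a short construction followed by a single citation of the earlier proposition.
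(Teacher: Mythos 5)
Your proof is correct, but it takes a different route from the paper. You reduce the theorem to a single citation of Proposition~\ref{3.2}, exhibiting an explicit infinite family of densities $\phi_n=m\chi_{F\setminus E_n}+M\chi_{E_n}$ on a fixed set $F$ and checking that the resulting measures $\chi_F\phi_n\,d\lambda$ are pairwise distinct via their total masses; this is more elementary and, notably, more careful about the distinctness of the family than the paper itself. The paper instead builds the measures by iterated convolution: it sets $\mu_0=\phi\,d\lambda$ with $m\le\phi\le M$ and $\mu_k=\frac{1}{\lambda(E_k)}\chi_{E_k}\,d\lambda$, observes that the convolved density $\phi\ast\frac{1}{\lambda(E_1)}\chi_{E_1}\ast\cdots$ again lies between $m$ and $M$, and re-derives the frame inequalities directly from Plancherel's theorem, obtaining $\lambda\in\mathcal{F}_{m,M}(\mu_0\ast\mu_1\ast\cdots\ast\mu_n)$. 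Analytically the two arguments rest on the same fact (a density pinched between $m$ and $M$ preserves frame bounds up to these factors), so your shortcut loses nothing for the statement as written; what the paper's convolution construction buys is the specific family of measures that is reused and generalized later (Theorem~\ref{3.4}, Examples~\ref{7.3}--\ref{9.3}) and the thematic link to convolution-stability results such as Proposition~\ref{5.3}. One small caveat in your write-up: the distinctness argument $\mu_n(F)=m|F|+(M-m)|E_n|$ requires $m<M$ strictly (with $m=M$ all your measures coincide), so you should fix $m<M$ rather than merely $m\le M$.
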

\begin{proof}
We first recall that for measurable functions $ f, g $ on $ \mathbb{R}^d $, if $ \mu= fd\lambda $ and $ \nu= gd\lambda $, then we have $ \mu\ast \nu = (f \ast g) d\lambda $. Now let $ 0 < m \leq \phi(x)\leq M<\infty $ $ \lambda $-a.e. on $ \mathbb{R}^d $. For every $ n\in\mathbb{N}$ with $ 1\leq n\leq N $, let $ E_n \subset \mathbb{R}^d $, $ \lambda(E_n) < \infty $ and $ \mu_n= \frac{1}{\lambda(E_n)}\chi_{E_n} d\lambda $. Take $ \mu_0= \phi d\lambda $. Then $ \mu_0 \ast \mu_1= (\phi \ast \frac{1}{\lambda(E_1)}\chi_{E_1}) d\lambda $ and 
\begin{equation*}
 \phi \ast \frac{1}{\lambda(E_1)}\chi_{E_1}(x) = \int_{\mathbb{R}^d} \phi(x-y)\frac{1}{\lambda(E_1)}\chi_{E_1}(y)d\lambda(y) \leq M.
\end{equation*}
Similarly, we obtain $ m $ as a lower bound, i.e.,  
 \begin{equation}
m \leq \phi \ast \frac{1}{\lambda(E_1)}\chi_{E_1} \leq M.
\end{equation}
By Plancherel's theorem 
\begin{equation*}
\int_{\mathbb{R}^d} \left|\int_{\mathbb{R}^d}f(x)e_{-t}(x)d\lambda(x)\right|^2 d\lambda(t) =\int_{\mathbb{R}^d} |f(x)|^2 d\lambda(x) \quad \text{for all }  f\in L^2(\lambda),
\end{equation*}
so $ \lambda $ is a Plancherel measure to itself, and by (3.1), for every $ f\in L^2(\lambda) $ we have $ (\phi \ast \frac{1}{\lambda(E_1)}\chi_{E_1}) f \in L^2(\lambda) $. 
Hence for all $ f\in L^2(\mu_0\ast\mu_1) $, 
\begin{align*}
\int_{\mathbb{R}^d} \left|\widehat{fd(\mu_0 \ast \mu_1)}(t)\right|^2 d\lambda(t) &= \int_{\mathbb{R}^d} \left|\int_{\mathbb{R}^d}(\phi \ast \frac{1}{\lambda(E_1)}\chi_{E_1})(x) f(x) e_{-t}(x)d\lambda(x)\right|^2 d\lambda(t)\\
&=\int_{\mathbb{R}^d} |(\phi \ast \frac{1}{\lambda(E_1)}\chi_{E_1})(x) f(x)|^2 d\lambda(x)\\
&\leq M \int_{\mathbb{R}^d} |f(x)|^2 (\phi \ast \frac{1}{\lambda(E_1)}\chi_{E_1})d\lambda(x)\\
&= M\| f\|^2_{L^2(\mu_0\ast\mu_1)}.
\end{align*}
Analogously, we obtain the lower bound and consequently, for all $ f\in L^2(\mu_0\ast\mu_1) $,
\begin{equation*}
m\| f\|^2_{L^2(\mu_0\ast\mu_1)} \leq \int_{\mathbb{R}^d} \left|\widehat{fd(\mu_0 \ast \mu_1)}(t)\right|^2 d\lambda(t) \leq M\| f\|^2_{L^2(\mu_0\ast\mu_1)}.
\end{equation*}
Likewise, convolution of measures $ \mu_0\ast\mu_1 $ and $ \mu_2= \frac{1}{\lambda(E_2)}\chi_{E_2}d\lambda $ yields $ \lambda \in \mathcal{F}_{m,M}(\mu_0\ast\mu_1\ast\mu_2)$, and repeating this process gives the assertion. Precisely, for any $ n\in\mathbb{N}$ with $ 1\leq n\leq N $, one can obtain $ \lambda \in \mathcal{F}_{m,M}(\mu_0\ast\mu_1\ast\mu_2\cdots\ast\mu_n)$.
\end{proof}
 We proved Theorem \ref{3.3} considering the fact that $ \lambda $ is a Plancherel measure to itself (Plancherel theorem). In the next theorem we show that if $ F\subseteq \mathbb{R}^d $, $ \mu=\chi_Fd\lambda $, then Theorem \ref{3.3} can be extended to every frame measure for $ \mu $.
\begin{theorem} \label{3.4}
\emph{Let $ F\subseteq \mathbb{R}^d $ and let $ \nu $ be a frame measure for $ \mu=\chi_Fd\lambda $ with bounds $ A, B $. Then $ \nu $ is a frame measure for infinitely many measures which are absolutely continuous with respect to Lebesgue measure}.
\end{theorem}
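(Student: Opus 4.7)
The plan is to invoke Proposition~\ref{3.1} as the engine producing the infinite family, rather than to replicate the Plancherel-plus-convolution scheme of Theorem~\ref{3.3}. Because $\nu \in \mathcal{F}_{A,B}(\mu)$ with $\mu = \chi_F d\lambda$, Proposition~\ref{3.1} tells us that whenever $E \subseteq F$ is measurable and $\phi$ is a measurable function satisfying $0 < m \leq \phi(x) \leq M < \infty$ for $\lambda$-a.e.\ $x \in E$, the measure $\chi_E \phi\, d\mu = \chi_E \phi\, d\lambda$ admits $\nu$ as a frame measure with bounds $mA, MB$. Every such measure is absolutely continuous with respect to $\lambda$, so the theorem reduces to exhibiting infinitely many distinct choices of $(E, \phi)$.

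To produce infinitely many such measures in the spirit of Theorem~\ref{3.3}, I would build the densities $\phi$ themselves by iterated convolution. Fix any $\phi_0 \colon \mathbb{R}^d \to [m,M]$ with $0 < m \leq \phi_0 \leq M < \infty$ (for instance a positive constant). For probability measures $\mu_j = \frac{1}{\lambda(E_j)}\chi_{E_j} d\lambda$ with $\lambda(E_j) < \infty$, set
\[
\phi_n = \phi_0 \ast \tfrac{1}{\lambda(E_1)}\chi_{E_1} \ast \cdots \ast \tfrac{1}{\lambda(E_n)}\chi_{E_n}.
\]
By the same estimate as inequality~(3.1) in Theorem~\ref{3.3}, one inductively sees $m \leq \phi_n \leq M$ on all of $\mathbb{R}^d$, and in particular on $F$. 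Applying Proposition~\ref{3.1} with $E = F$ and $\phi = \phi_n|_F$ yields $\nu \in \mathcal{F}_{mA, MB}(\chi_F \phi_n d\lambda)$. Varying $n$ and the sets $E_1, \dots, E_n$ produces infinitely many such absolutely continuous measures. (One could instead exhibit infinitely many examples more cheaply via the scaling family $\{c\, \chi_F d\lambda\}_{c>0}$, but the convolution construction emphasizes the intended parallel with Theorem~\ref{3.3}.)

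There is no substantive obstacle: the only step requiring verification is that each iterated convolution $\phi_n$ remains bounded between $m$ and $M$, which is exactly the estimate already carried out in Theorem~\ref{3.3}. The key conceptual point is that here the hypothesis ``$\nu \in \mathcal{F}_{A,B}(\chi_F d\lambda)$'' plays the role that Plancherel's identity played in Theorem~\ref{3.3}; Proposition~\ref{3.1} then propagates the frame measure property through bounded density perturbations supported on $F$, which is precisely what is required for the conclusion. In particular, no manipulation of $\widehat{f\, d(\mu_0 \ast \mu_1)}$ via a Fubini/Cauchy--Schwarz argument against $\nu$ is needed; the potentially delicate lower bound is handled entirely by Proposition~\ref{3.1}.
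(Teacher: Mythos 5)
Your proposal is correct and takes essentially the same route as the paper: the paper builds exactly the same family of measures $\chi_F\bigl(\phi \ast \tfrac{1}{\lambda(E_1)}\chi_{E_1} \ast \cdots \ast \tfrac{1}{\lambda(E_n)}\chi_{E_n}\bigr)\,d\lambda$, checks $m \leq \phi \ast \cdots \leq M$, and then carries out inline the very bounded-density argument that Proposition~\ref{3.1} encapsulates (applying the frame inequality for $\mu=\chi_F d\lambda$ to $(\phi\ast\cdots)f$), obtaining the same bounds $mA, MB$. Invoking Proposition~\ref{3.1} instead of re-deriving that estimate is a harmless streamlining, not a genuinely different argument.
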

\begin{proof}
Let $ 0 < m \leq \phi(x)\leq M <\infty $ $ \mu $-a.e. on $ \mathbb{R}^d $. For every $ n\in\mathbb{N}$ with $ 1\leq n\leq N $, let $ E_n \subset \mathbb{R}^d $, $ \lambda(E_n) < \infty $ and $ \mu_n= \frac{1}{\lambda(E_n)}\chi_{E_n} d\lambda $. If $ \mu_0= \phi d\lambda $, then $ \mu_0 \ast \mu_1= (\phi \ast \frac{1}{\lambda(E_1)}\chi_{E_1}) d\lambda $. We have 
 \begin{equation*}
m \leq \phi \ast \frac{1}{\lambda(E_1)}\chi_{E_1} \leq M,
\end{equation*}
and for every $ f\in L^2(\chi_Fd(\mu_0 \ast \mu_1)) $ we have
\begin{equation*}
\int_{\mathbb{R}^d} \left|\widehat{f\chi_Fd(\mu_0 \ast \mu_1)}(t)\right|^2d\nu(t) = \int_{\mathbb{R}^d} \left|\int_{\mathbb{R}^d} f(x) e_{-t}(x) (\phi \ast \frac{1}{\lambda(E_1)}\chi_{E_1})(x) d\mu(x)\right|^2 d\nu(t). 
\end{equation*}
Since $ \nu $ is a frame measure for $ \mu $ and $ (\phi \ast \frac{1}{\lambda(E_1)}\chi_{E_1}) f \in L^2(\mu) $, 
\begin{align*}
\int_{\mathbb{R}^d} \left|\int_{\mathbb{R}^d} f(x) e_{-t}(x) (\phi \ast \frac{1}{\lambda(E_1)}\chi_{E_1})(x) d\mu(x)\right|^2 d\nu(t) &\leq B\int_{\mathbb{R}^d}|(\phi \ast \frac{1}{\lambda(E_1)}\chi_{E_1})(x) f(x)|^2 d\mu(x) \\
& \leq BM  \int_{\mathbb{R}^d} |f(x)|^2 (\phi \ast \frac{1}{\lambda(E_1)}\chi_{E_1})(x) d\mu(x) \\
& = BM  \int_{\mathbb{R}^d} |f(x)|^2 (\phi \ast \frac{1}{\lambda(E_1)}\chi_{E_1})(x) \chi_F(x) d\lambda(x) \\
&= BM \| f\|^2_{L^2(\chi_Fd(\mu_0 \ast \mu_1))}
\end{align*}
Similarly, we obtain $ Am $ as a lower bound. Hence for all $ f\in L^2(\chi_Fd(\mu_0 \ast \mu_1)) $,
\begin{equation*}
mA\| f\|^2_{L^2(\chi_Fd(\mu_0\ast\mu_1))} \leq \int_{\mathbb{R}^d} \left|\widehat{f\chi_Fd(\mu_0 \ast \mu_1)}(t)\right|^2 d\nu(t) \leq MB\| f\|^2_{L^2(\chi_Fd(\mu_0\ast\mu_1))}.
\end{equation*}
 Convolution of measures $ \mu_0\ast\mu_1 $ and $ \mu_2= \frac{1}{\lambda(E_2)}\chi_{E_2}d\lambda $ yields $ \nu \in \mathcal{F}_{mA,MB}(\chi_Fd(\mu_0\ast\mu_1\ast\mu_2))$. Likewise, for any $ n\in\mathbb{N}$ with $ 1\leq n\leq N $, one can obtain $ \nu \in \mathcal{F}_{mA, MB}(\chi_Fd(\mu_0\ast\mu_1\ast\mu_2\cdots\ast\mu_n))$, and then the theorem follows.
\end{proof}
\begin{remark}\label{REE}
In Theorems \ref{3.3}, \ref{3.4}, if any of the measures $ \mu_n = \frac{1}{\lambda(E_n)}\chi_{E_n} $ changes to $ \mu_n = \chi_{E_n} $, then the bounds is multiplied by $ \lambda(E_n) $.
\end{remark}
\begin{example}\label{7.3}
Let $ F\subseteq\mathbb{R}^d $ and for every $ n\in\mathbb{N}$ with $ 1\leq n\leq N $, let $ E_n \subset \mathbb{R}^d $, $ \lambda(E_n) < \infty $. By Proposition~\ref{3.2}, $ \lambda $ is a Plancherel measure for $ \mu=\chi_{F}d\lambda $ then by Theorem \ref{3.4}, $ \lambda $ is a Plancherel measure for $\chi_F d(\lambda\ast\frac{1}{\lambda(E_1)}\chi_{E_1}d\lambda\ast\cdots\ast\frac{1}{\lambda(E_n)}\chi_{E_n}d\lambda) $.
\end{example}
\begin{example}\label{8.3}
For every $ n\in\mathbb{N}$ with $ 1\leq n\leq N $, let $ E_n \subset \mathbb{R}^d $, $ \lambda(E_n) < \infty $. By Proposition \ref{SA}, the measure $ \nu=\sum_{t \in \mathbb{Z}^d} \delta_t $ is a Plancherel measure for $ \mu= \chi_{\{[0 , 1]^d\}}d\lambda $ and also by Theorem \ref{3.4}, $ \nu $ is a Plancherel measure for $\chi_{\{[0, 1]^d\}}d( \lambda\ast\frac{1}{\lambda(E_1)}\chi_{E_1}d\lambda\ast\cdots\ast\frac{1}{\lambda(E_n)}\chi_{E_n}d\lambda) $. 
\end{example}
To show another application of Theorem \ref {3.4} we need the following theorem.  
\begin{theorem}[\cite{16}] \label{3.9}
\emph{There exist positive constants $ c, C $ such that for every set $ E \subset \mathbb{R}^d $ of finite Lebesgue measure, there is a discrete set $ \Lambda \subset \mathbb{R}^d $ such that $ \nu=\sum_{t \in \Lambda} \delta_t $ is a frame measure for $ L^2(\chi_{E}d\lambda) $ with frame bounds $ c|E| $ and $ C|E| $}.
\end{theorem}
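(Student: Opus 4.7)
The plan is to follow the Nitzan--Olevskii--Ulanovskii strategy: reduce the problem to a ``thinning'' question about lattice exponentials, then invoke the Marcus--Spielman--Srivastava resolution of the Kadison--Singer/Weaver $\mathrm{KS}_r$ problem to extract the desired discrete set $\Lambda$.

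First I would reduce to the case where $E$ is contained in a cube $Q_L = [-L/2, L/2]^d$; since $|E| < \infty$, inner regularity plus a standard translation argument allows this with negligible loss. A rescaling $x \mapsto L^{-1}x$ then reduces to $L = 1$, which moves the problem to subsets of $Q_1$ of varying measure. On $L^2(Q_1)$ the exponentials $\{e_\gamma\}_{\gamma \in \mathbb{Z}^d}$ form an orthonormal basis, so viewing $L^2(\chi_E d\lambda)$ as a closed subspace of $L^2(\chi_{Q_1}d\lambda)$, exactly as in Proposition~\ref{SA}, the measure $\sum_{\gamma\in\mathbb{Z}^d}\delta_\gamma$ is a tight frame measure for $\chi_E d\lambda$ with bound $1 = |Q_1|$. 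This already yields a discrete frame measure, but the bound is of order $|Q_1|$ rather than $|E|$, so when $|E| \ll 1$ the bound is larger than required by a factor of $1/|E|$.

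The heart of the argument is to extract a subset $\Lambda \subseteq \mathbb{Z}^d$ of density proportional to $|E|$ that remains a frame for $L^2(\chi_E d\lambda)$ with bounds of order $|E|$. The key observation is that the analysis vectors $\gamma \mapsto e_\gamma|_E$, viewed in $L^2(\chi_E d\lambda)$, form a Parseval frame whose individual norms are all exactly $|E|^{1/2}$. The Marcus--Spielman--Srivastava theorem then partitions $\mathbb{Z}^d$ into a bounded number $r=r(d)$ of subsets such that each associated sub-frame has an operator whose spectrum is squeezed into an interval comparable to $|E|$. A pigeonhole choice of one such subset produces the desired $\Lambda$, and translation/dilation invariance of the Fourier transform shows that the constants $c$ and $C$ depend only on $d$ (not on $E$ or $L$).

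The main obstacle is of course the Kadison--Singer step: invoking the MSS theorem is the essential non-trivial ingredient, and every known proof of the full statement passes through it (the result was open precisely until the MSS breakthrough). Aside from that, the remaining work is bookkeeping: tracking how Fourier-transform duality carries the MSS partition into frame-bound inequalities for the exponential system in $L^2(\chi_E d\lambda)$, and verifying that under the rescaling $E \mapsto L^{-1}E$ the bounds scale by exactly the factor needed to produce the stated linear dependence on $|E|$.
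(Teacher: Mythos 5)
Your sketch is being compared against a proof that the paper does not actually contain: Theorem \ref{3.9} is imported verbatim from Nitzan--Olevskii--Ulanovskii \cite{16} and used as a black box, so the only meaningful comparison is with that source. On the positive side, you have correctly identified the engine of \cite{16}: for a set $E$ inside a unit cube, the restricted lattice exponentials $\{e_\gamma|_E\}_{\gamma\in\mathbb{Z}^d}$ form a Parseval frame for $L^2(\chi_E d\lambda)$ (as in Proposition \ref{SA}) with every vector of norm $|E|^{1/2}$, and the Marcus--Spielman--Srivastava/Weaver selection is what thins this system to one with two-sided bounds proportional to $|E|$; your dilation bookkeeping (bounds scaling by $L^d$ under $x\mapsto L^{-1}x$) is also correct.

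There are, however, genuine gaps. The decisive one is your opening reduction: inner regularity plus translation does \emph{not} reduce a finite-measure, possibly unbounded, set $E$ to a subset of a cube ``with negligible loss.'' Passing from $E$ to $E'\subset E$ with $|E\setminus E'|$ small changes the Hilbert space $L^2(\chi_E d\lambda)$ itself, and a frame for $L^2(\chi_{E'}d\lambda)$ gives no lower frame bound at all for functions supported on $E\setminus E'$; even completeness can fail (if $E$ contains two distant pieces congruent modulo $\mathbb{Z}^d$, the lattice exponentials annihilate suitable differences of bumps placed on them). Since the frame inequality must hold for \emph{every} $f\in L^2(\chi_E d\lambda)$, no truncation argument of this type can work, and handling unbounded sets of finite measure is exactly the main difficulty and the titular contribution of \cite{16}; it is resolved there by an additional argument, not by cutting $E$ down to a bounded piece. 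So as written your proposal only addresses the bounded case. Secondarily, the Kadison--Singer step is misstated: a partition of $\mathbb{Z}^d$ into a bounded number $r=r(d)$ of parts cannot force upper bounds of order $|E|$ when $|E|$ is small (you need on the order of $1/|E|$ parts, or, as in \cite{16}, an iterated two-fold splitting whose $O(\sqrt{\delta})$ errors are summed over roughly $\log(1/|E|)$ steps); a pigeonhole over the parts does not yield a single part with a lower bound valid simultaneously for all $f$, since the part carrying a $1/r$ share of $\|f\|^2$ depends on $f$; and MSS is a finite statement, so the extension to the infinite collection $\{e_\gamma|_E\}$ needs the standard compactness/diagonal argument. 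The latter points are repairable; the unbounded-to-bounded reduction is the step that fails.
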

\begin{example}\label{9.3}
Let $ E\subset\mathbb{R}^d $ and for every $ n\in\mathbb{N}$ with $ 1\leq n\leq N $, let $ E_n \subset \mathbb{R}^d $, $ \lambda(E_n) < \infty $. By Theorems \ref{3.9} and \ref {3.4}, $ \nu=\sum_{t \in \Lambda} \delta_t $ is a frame measure for $\chi_E d(\lambda\ast\frac{1}{\lambda(E_1)}\chi_{E_1}d\lambda\ast\cdots\ast\frac{1}{\lambda(E_n)}\chi_{E_n}d\lambda) $ with frame bounds $ c|E| $ and $ C|E| $. 
\end{example}
\begin{proposition}[\cite{3}]\label{5.3}
\emph{Let $A$ and $B$ be fixed positive constants and $ \mu $ be a finite measure. Then the set of all Bessel measures for $ \mu $ with bound $B$ (or $ \mathcal{B}_B(\mu) $) and the set of all frame measures for $ \mu $ with bounds $A$, $B$ (or $ \mathcal{F}_{A, B}(\mu) $), are convex and closed under convolution with Borel probability measures}.
\end{proposition}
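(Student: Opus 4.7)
My plan is to dispatch the two claims for $\mathcal{B}_B(\mu)$ and then observe that the frame case $\mathcal{F}_{A,B}(\mu)$ follows by applying each argument to the lower bound as well. Convexity is essentially linearity of integration: for $\nu_1,\nu_2 \in \mathcal{B}_B(\mu)$ and $\alpha \in [0,1]$, writing out $\int |\widehat{fd\mu}|^2 d(\alpha\nu_1 + (1-\alpha)\nu_2)$ as $\alpha \int |\widehat{fd\mu}|^2 d\nu_1 + (1-\alpha)\int |\widehat{fd\mu}|^2 d\nu_2$ produces the convex combination of the two Bessel bounds, which collapses to $B\|f\|_{L^2(\mu)}^2$. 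The identical identity yields a lower bound of $A\|f\|_{L^2(\mu)}^2$ in the frame case.

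For closure under convolution with a Borel probability measure $\rho$, my approach is to exploit the translation-to-modulation duality of the Fourier transform. Given $\nu \in \mathcal{B}_B(\mu)$, I would unfold the convolution via Tonelli's theorem (legitimate since $f \in L^2(\mu) \subset L^1(\mu)$ makes $\widehat{fd\mu}$ bounded and continuous, hence the integrand is jointly measurable and nonnegative):
\begin{equation*}
\int_{\mathbb{R}^d} |\widehat{fd\mu}(u)|^2 d(\nu * \rho)(u) = \int_{\mathbb{R}^d}\int_{\mathbb{R}^d} |\widehat{fd\mu}(s+t)|^2 d\nu(s) d\rho(t).
\end{equation*}
The crucial algebraic identity is $\widehat{fd\mu}(s+t) = \widehat{(f e_{-t})d\mu}(s)$, which follows by factoring $e_{-(s+t)}(x) = e_{-t}(x) e_{-s}(x)$ inside the defining integral. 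Since $|e_{-t}| \equiv 1$ gives the modulation invariance $\|f e_{-t}\|_{L^2(\mu)} = \|f\|_{L^2(\mu)}$, applying the Bessel bound of $\nu$ to $f e_{-t} \in L^2(\mu)$ produces the estimate
\begin{equation*}
\int_{\mathbb{R}^d} |\widehat{(f e_{-t})d\mu}(s)|^2 d\nu(s) \leq B \|f\|_{L^2(\mu)}^2
\end{equation*}
uniformly in $t$, and this bound is preserved under integration against the probability measure $\rho$ because $\rho(\mathbb{R}^d) = 1$. Reversing the inequality gives the lower bound and settles the frame case.

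The main obstacle is largely bookkeeping: one must verify modulation invariance of the $L^2(\mu)$-norm and justify the Tonelli interchange, both routine once finiteness of $\mu$ is invoked. No deeper machinery enters; the argument pivots on the single translation-to-modulation identity for $\widehat{fd\mu}$, which reduces the convolution case to the Bessel/frame hypothesis on $\nu$ applied to the modulated function $f e_{-t}$.
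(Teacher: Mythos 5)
Your proof is correct: convexity follows from linearity of the integral, and the convolution case rests on the modulation identity $\widehat{fd\mu}(s+t)=\widehat{(fe_{-t})d\mu}(s)$ together with $\|fe_{-t}\|_{L^2(\mu)}=\|f\|_{L^2(\mu)}$ and Tonelli (legitimate since $\widehat{fd\mu}$ is continuous for finite $\mu$ and a Bessel measure $\nu$ is $\sigma$-finite). The paper itself states this proposition as a citation to \cite{3} without reproducing a proof, and your argument is exactly the standard one used there, so there is nothing to correct.
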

\begin{remark}
Since the set of Bessel/frame measures (for a fixed measure $ \mu $) is closed under convolution with Borel probability measures, if a measure $ \nu $ is a Bessel/frame measure for $ \mu $, then considering Proposition \ref{Aj}, one can obtain infinitely many $ \sigma $-finite measures. In fact, there are infinitely many probability measures $ \rho $ (as we mentioned in Remark \ref{RE}) and one can convolute $ \nu $ with every one of them many times.
\end{remark}
\begin{example}\label{!!}
Based on Examples \ref{7.3} and \ref{8.3}, $ \lambda $ and the discrete measure $ \nu=\sum_{t \in \mathbb{Z}^d} \delta_t $ are in $ \mathcal{F}_{1,1}\left(\chi_{\{[0, 1]^d\}}d(\lambda\ast \frac{1}{\lambda(E_1)}\chi_{E_1}d\lambda\ast\cdots\ast\frac{1}{\lambda(E_n)}\chi_{E_n}d\lambda) \right)$. Since by proposition \ref{5.3}, the set of all frame measures are convex, we have $ \frac{1}{2}(\lambda + \nu) \in  \mathcal{F}_{1,1}\left(\chi_{\{[0, 1]^d\}}d( \lambda\ast\frac{1}{\lambda(E_1)}\chi_{E_1}d\lambda\ast\cdots\ast\frac{1}{\lambda(E_n)}\chi_{E_n}d\lambda) \right)$.
\end{example}
\begin{example}\label{BX}
Let $ \mathcal{P}(\mathbb{R}^d) $ be the set of all probability measures on $ \mathbb{R}^d $ and for every $ n\in\mathbb{N}$ with $ 1\leq n\leq N $, let $ \rho_n, \rho'_n \in \mathcal{P}(\mathbb{R}^d) $. We have $ \lambda $ and $ \nu=\sum_{t \in \mathbb{Z}^d} \delta_t $ are in $ \mathcal{F}_{1,1}(\chi_{\{[0, 1]^d\}}d\lambda) $ (see Propositions~\ref{3.2} and \ref{SA}). By Proposition \ref{5.3} the set $ \mathcal{F}_{1, 1}(\chi_{\{[0, 1]^d\}}d\lambda) $ is closed under convolution with Borel probability measures, so for all $ n\in \{1,\ldots, N \} $, we have $ \lambda\ast\rho_1\ast\cdots\ast\rho_n $ and $ \nu \ast\rho'_1\ast\cdots\ast\rho'_n $ and the convex combinations of all these measures are in $ \mathcal{F}_{1,1}(\chi_{\{[0, 1]^d\}}d\lambda) $. In addition, by Theorem \ref{3.4}, for all $ n\in \{1,\ldots, N \} $ we have $ \lambda\ast\rho_1\ast\cdots\ast\rho_n $ and $ \nu \ast\rho'_1\ast\cdots\ast\rho'_n $ and also the convex combinations of all these measures are in $ \mathcal{F}_{1,1}\left(\chi_{\{[0, 1]^d\}}d( \lambda\ast\frac{1}{\lambda(E_1)}\chi_{E_1}d\lambda\ast\cdots\ast\frac{1}{\lambda(E_n)}\chi_{E_n}d\lambda) \right) $. 
\end{example}
\begin{remark}
Note that by Proposition \ref{3.1} we can construct new measures such that admit all frame measures in Example \ref{BX} as frame measures, considering the fact that for any $ E\subset [0, 1]^d $ we have $ \lambda $ and $ \nu=\sum_{t \in \mathbb{Z}^d} \delta_t $ are in $ \mathcal{F}_{1,1}(\chi_Ed\lambda) $.
\end{remark}
\begin{definition}[\cite{3}]
A sequence of Borel probability measures $ \{\rho_n\}_{n\in \mathbb{N}} $ is called an \emph{approximate identity} if 
\begin{equation*}
 sup \{ \parallel t\parallel : t\in \mathrm{supp} (\rho_n)\} \rightarrow 0  \quad \text{as}\quad  n\rightarrow \infty.
\end{equation*} 
\end{definition}
\begin{example}
Some approximate identities on $ \mathbb{R}^d $ are:

(i) $ \rho_n = n^d \chi_{\{[0, \frac{1}{n}]^d\}} $.

(ii) $ \rho_n = (\dfrac{n}{2})^d \chi_{\{[-\frac{1}{n}, \frac{1}{n}]^d\}} $.

(iii) $ \rho_n = (n(n+1))^d \chi_{\{[\frac{1}{n+1}, \frac{1}{n}]^d\}} $.

(iv) $ \rho_n = 2^{(n-1)d} \chi_{\{[0, \frac{1}{2^{n-1}}]^d\}} $ (in general, for $ m\in \mathbb{N} $, $ m\geq 2 $, $ \lambda_n = m^{(n-1)d} \chi_{\{[0, \frac{1}{m^{n-1}}]^d\}} $).
\end{example}
By Proposition \ref{5.3}, if $ \nu $ is a Bessel/frame measure for $ \mu $, then for any probability measure $ \rho $, the measure $ \nu\ast\rho $ is also a Bessel/frame measure for $ \mu $. To see under what conditions the converse is true we give the following theorem from \cite{3}.  
\begin{theorem}[\cite{3}]
\emph{Let $ \{\rho_n\} $ be an approximate identity. Suppose $ \nu $ is a $ \sigma $-finite measure and suppose $ \nu \ast \rho_n $ are Bessel/frame measures for $ \mu $ with uniform bounds, independent of $ n $. Then $ \nu $ is a Bessel/frame measure for $ \mu $}.
\end{theorem}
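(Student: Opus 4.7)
The plan is to fix an arbitrary $f \in L^2(\mu)$ and to rewrite the integral $\int |\widehat{fd\mu}|^2\, d(\nu \ast \rho_n)$ as an integral against $\nu$ of a smoothed version of $g := |\widehat{fd\mu}|^2$. Since $\mu$ is finite, $f \in L^1(\mu)$ and the Fourier transform $\widehat{fd\mu}$ is bounded and uniformly continuous (from dominated convergence applied to $|e_{-h} - 1|$ as $h \to 0$); hence $g$ is a bounded, nonnegative, uniformly continuous function on $\mathbb{R}^d$.

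First I would apply Fubini's theorem to obtain
\begin{equation*}
\int g(t)\, d(\nu \ast \rho_n)(t) \;=\; \int \widetilde{g}_n(t)\, d\nu(t), \quad \text{where } \widetilde{g}_n(t) := \int g(t+s)\, d\rho_n(s).
\end{equation*}
Setting $\varepsilon_n := \sup\{|s| : s \in \mathrm{supp}(\rho_n)\}$ and using the approximate-identity hypothesis $\varepsilon_n \to 0$ together with the uniform continuity of $g$, I get $\|\widetilde{g}_n - g\|_\infty \to 0$, so in particular $\widetilde{g}_n(t) \to g(t)$ pointwise everywhere. Applying Fatou's lemma to the upper (Bessel) inequality then yields
\begin{equation*}
\int g\, d\nu \;=\; \int \liminf_n \widetilde{g}_n\, d\nu \;\le\; \liminf_n \int \widetilde{g}_n\, d\nu \;\le\; B\,\|f\|^2_{L^2(\mu)},
\end{equation*}
which proves the Bessel statement and shows, as a byproduct, that $g \in L^1(\nu)$.

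For the frame statement I still need $A\|f\|^2_{L^2(\mu)} \le \int g\, d\nu$. The natural route is a reverse-Fatou (or dominated-convergence) argument using $g \in L^1(\nu)$: the uniform estimate $\widetilde{g}_n \le g + \omega_g(\varepsilon_n)$ almost supplies an $L^1(\nu)$-majorant, but the additive constant $\omega_g(\varepsilon_n)$ is not $\nu$-integrable when $\nu$ has infinite mass. I would circumvent this by exhausting $\mathbb{R}^d$ by sets $A_k$ of finite $\nu$-measure with $\int_{A_k^c} g\, d\nu \to 0$ (available from $\sigma$-finiteness of $\nu$ and integrability of $g$), running dominated convergence on each $A_k$ where the constant term is harmless, and controlling $\int_{A_k^c} \widetilde{g}_n\, d\nu$ uniformly in $n$ by comparison with $\int_{A_k^c} g\, d\nu$ via the uniform closeness plus the already-established Bessel bound. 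This uniform-integrability step is the \emph{main obstacle}; once $\limsup_n \int \widetilde{g}_n\, d\nu \le \int g\, d\nu$ is in hand, combining with the uniform lower bound $A\|f\|^2_{L^2(\mu)} \le \int \widetilde{g}_n\, d\nu$ finishes the proof.
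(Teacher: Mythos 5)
The paper itself does not prove this statement (it is quoted from \cite{3}), so your proposal can only be measured against its own internal logic and the argument in \cite{3}. Your first half is fine: writing $\int g\,d(\nu\ast\rho_n)=\int\widetilde g_n\,d\nu$ by Tonelli, noting that $g=|\widehat{fd\mu}|^2$ is bounded and uniformly continuous (using finiteness of $\mu$), getting $\widetilde g_n\to g$ uniformly, and applying Fatou gives the Bessel bound exactly as in the standard proof.

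The frame (lower-bound) half, however, has a genuine gap, and it sits precisely at the step you yourself flag as the main obstacle. Your plan is to bound $\int_{A_k^c}\widetilde g_n\,d\nu$ ``by comparison with $\int_{A_k^c}g\,d\nu$ via the uniform closeness plus the already-established Bessel bound,'' but uniform closeness only gives $\int_{A_k^c}\widetilde g_n\,d\nu\le\int_{A_k^c}g\,d\nu+\omega_g(\varepsilon_n)\,\nu(A_k^c)$, and $\nu(A_k^c)$ is infinite in every interesting case ($\nu$ Lebesgue, $\nu$ a counting measure on a lattice), while the Bessel hypothesis only gives the global bound $\int\widetilde g_n\,d\nu\le B\|f\|^2$, not a tail bound; so the uniform-integrability step is asserted, not proved. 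The correct way to close it is different in kind: once the Bessel bound for $\nu$ itself is established, the analysis map $h\mapsto\widehat{hd\mu}$ is bounded from $L^2(\mu)$ into $L^2(\nu)$ with norm at most $\sqrt B$. Writing $\widehat{fd\mu}(t+s)=\widehat{(e_{-s}f)d\mu}(t)$ and using that $s\mapsto e_{-s}f$ is continuous in $L^2(\mu)$ (dominated convergence, $\mu$ finite), the function $H(s):=\int|\widehat{fd\mu}(t+s)|^2d\nu(t)$ is continuous at $0$; since $\mathrm{supp}(\rho_n)$ shrinks to the origin, $A\|f\|^2_{L^2(\mu)}\le\int H(s)\,d\rho_n(s)\le\sup_{\|s\|\le\varepsilon_n}H(s)\to H(0)=\int g\,d\nu$, which is the lower bound. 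Equivalently, in your exhaustion scheme the tail is controlled not by uniform closeness of $\widetilde g_n$ to $g$ but by the estimate $g(t+s)\le 2g(t)+2|\widehat{(e_{-s}f-f)d\mu}(t)|^2$ together with the Bessel bound applied to $e_{-s}f-f$; without this ingredient (which is the heart of the argument in \cite{3}) your proof of the frame case is incomplete.
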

\begin{lemma} \label{Le.2}
\emph{Let $ \nu \in \mathcal{F}_{A, B}(\mu) $. Let $ 0 < m\leq \phi(x)\leq M <\infty$, $ \mu $-a.e. on $ \mathbb{R}^d $. Then $ \nu \in \mathcal{F}_{mA, MB}(\phi d\mu) $}.
\end{lemma}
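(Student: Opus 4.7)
The plan is to recognize that Lemma \ref{Le.2} is essentially the $E = \mathrm{supp}\,\mu$ instance of Proposition \ref{3.1} (with the characteristic factor replaced by $1$), and to give a short direct argument mirroring its proof. I would begin by checking that $L^2(\phi d\mu) \subseteq L^2(\mu)$ after multiplication by $\phi$: for any $f \in L^2(\phi d\mu)$, using $\phi \leq M$ $\mu$-a.e. one has
\begin{equation*}
\int_{\mathbb{R}^d} |\phi(x) f(x)|^2 d\mu(x) = \int_{\mathbb{R}^d} \phi(x) |f(x)|^2 \phi(x)\, d\mu(x) \leq M\, \|f\|_{L^2(\phi d\mu)}^2 < \infty,
\end{equation*}
so $\phi f \in L^2(\mu)$.

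Next I would rewrite the Fourier transform against $\mu' = \phi d\mu$ as a Fourier transform against $\mu$ applied to a reweighted function, namely $\widehat{fd\mu'}(t) = \int f(x)\phi(x) e_{-t}(x) d\mu(x) = \widehat{(\phi f)d\mu}(t)$. Applying the frame measure inequality $A\|g\|_{L^2(\mu)}^2 \leq \int |\widehat{g d\mu}|^2 d\nu \leq B\|g\|_{L^2(\mu)}^2$ with $g = \phi f$ yields
\begin{equation*}
A\,\|\phi f\|_{L^2(\mu)}^2 \leq \int_{\mathbb{R}^d} |\widehat{f d\mu'}(t)|^2\, d\nu(t) \leq B\,\|\phi f\|_{L^2(\mu)}^2.
\end{equation*}

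The final step is to sandwich $\|\phi f\|_{L^2(\mu)}^2$ between $m\|f\|_{L^2(\mu')}^2$ and $M\|f\|_{L^2(\mu')}^2$. Writing $\|\phi f\|_{L^2(\mu)}^2 = \int \phi(x)|f(x)|^2 d\mu'(x)$ and invoking $m \leq \phi \leq M$ $\mu$-a.e. (equivalently $\mu'$-a.e., since $\mu' \ll \mu$) gives the two-sided estimate at once, and combining it with the previous display delivers $mA\|f\|_{L^2(\mu')}^2 \leq \int |\widehat{f d\mu'}|^2 d\nu \leq MB\|f\|_{L^2(\mu')}^2$, i.e.\ $\nu \in \mathcal{F}_{mA,MB}(\phi d\mu)$. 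No significant obstacle is anticipated; the argument is routine, and the only mildly delicate point is that the hypothesis $\phi \leq M$ is used twice — once to place $\phi f$ in $L^2(\mu)$ and once for the upper sandwich bound — while the lower bound $\phi \geq m > 0$ is what prevents the lower frame constant from degenerating to $0$.
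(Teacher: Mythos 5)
Your proposal is correct and follows essentially the same route as the paper's own proof: rewrite $\widehat{fd(\phi d\mu)}$ as $\widehat{(\phi f)d\mu}$, apply the frame inequality for $\mu$ to $\phi f$, and sandwich $\int|\phi f|^2 d\mu$ between $m$ and $M$ times $\|f\|^2_{L^2(\phi d\mu)}$. Your only addition is the explicit check that $\phi f \in L^2(\mu)$, which the paper leaves implicit.
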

\begin{proof}
For every $ f\in L^2(\phi d\mu) $,
\begin{equation*}
 |\widehat{f(\phi d\mu)}| = |\widehat{\phi fd\mu}|, 
\end{equation*}
and
\begin{equation*}
m\int_{\mathbb{R}^d} |f|^2 \phi d\mu \leq \int_{\mathbb{R}^d}|\phi f|^2 d\mu \leq M \int_{\mathbb{R}^d} |f|^2 \phi d\mu.
\end{equation*}
So, we obtain
\begin{equation*}
 mA\int_{\mathbb{R}^d} |f|^2 \phi d\mu \leq \int_{\mathbb{R}^d}|\widehat{f(\phi d\mu)}|^2 d\nu(t) \leq MB \int_{\mathbb{R}^d} |f|^2 \phi d\mu.  
\end{equation*}
\end{proof}
\begin{corollary} \label{a.1}
\emph{If $ \nu \in \mathcal{F}_{A,B}(\mu) $, then for any constant $\alpha > 0$, $ \nu $ is a frame measure for $ \alpha\mu $. More precisely, $ \nu \in \mathcal{F}_{\alpha A,\alpha B}(\alpha\mu) $}.
\end{corollary}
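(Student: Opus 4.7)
The plan is to read this corollary as the special case of Lemma \ref{Le.2} in which the weight function $\phi$ is the positive constant $\alpha$. Setting $\phi(x)\equiv\alpha$ on $\mathbb{R}^d$ gives $\phi\, d\mu = \alpha\,d\mu$, and this is precisely the measure $\alpha\mu$, so there is no measure-theoretic subtlety in identifying $\phi d\mu$ with $\alpha\mu$ (both assign $\alpha\mu(E)$ to every Borel set $E$, and $L^2(\alpha\mu)=L^2(\mu)$ as function spaces with $\|f\|_{L^2(\alpha\mu)}^2=\alpha\|f\|_{L^2(\mu)}^2$).

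The first step is simply to observe that the hypothesis $0<m\le\phi(x)\le M<\infty$ $\mu$-a.e.\ of Lemma~\ref{Le.2} holds with $m=M=\alpha$, since $\alpha>0$ by assumption. The second step is to invoke the conclusion of Lemma~\ref{Le.2}, which yields $\nu\in\mathcal{F}_{mA,MB}(\phi d\mu)=\mathcal{F}_{\alpha A,\alpha B}(\alpha\mu)$, exactly the claim of the corollary.

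There is no real obstacle here; the only thing to notice is the notational identification $\alpha\mu=\alpha\,d\mu=\phi\,d\mu$ for the constant weight $\phi\equiv\alpha$. If one prefers to avoid citing Lemma~\ref{Le.2}, the statement can equally well be verified directly from the definition: $\widehat{f\,d(\alpha\mu)}(t)=\alpha\,\widehat{f\,d\mu}(t)$, so $|\widehat{f\,d(\alpha\mu)}(t)|^2=\alpha^2|\widehat{f\,d\mu}(t)|^2$, and multiplying the frame inequality for $\nu\in\mathcal{F}_{A,B}(\mu)$ through by $\alpha^2$, then rewriting $\alpha^2\|f\|_{L^2(\mu)}^2=\alpha\|f\|_{L^2(\alpha\mu)}^2$, produces the required two-sided bound with constants $\alpha A$ and $\alpha B$. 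Either route finishes the proof in a single line after the identification is made.
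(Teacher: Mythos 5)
Your proposal is correct and matches the paper's intent exactly: the corollary is stated immediately after Lemma \ref{Le.2} precisely because it is the special case $\phi\equiv\alpha$, $m=M=\alpha$, which is what you invoke. Your optional direct verification via $\widehat{f\,d(\alpha\mu)}=\alpha\,\widehat{f\,d\mu}$ and $\alpha^2\|f\|_{L^2(\mu)}^2=\alpha\|f\|_{L^2(\alpha\mu)}^2$ is also sound, so nothing is missing.
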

\begin{proposition}\label{frb}
\emph{Let $ \mu $ be a Borel measure supported on $ F\subseteq\mathbb{R}^d $. For every $ n\in\mathbb{N}$ with $ 1\leq n\leq N $, let $ L_n\subset \mathbb{R}^d $, $ \mu(L_n) = 0 $, $ E_n= F\setminus L_n $. Suppose that $ 0<m_n\leq \phi_n(x) \leq M_n<\infty $ $ \mu-a.e $ on $ E_n $ and $ \mu_n= \chi_{E_n}\phi_n(x)d\mu(x) $. If a $ \sigma $-finite measure $ \nu $ is a frame  measure for $ \mu$, then $ \nu $ is a frame  measure for $ \mu +\mu_1 +\cdots+ \mu_n $}.
\end{proposition}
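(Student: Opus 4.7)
The plan is to absorb the auxiliary measures $\mu_1,\ldots,\mu_n$ into a single Radon--Nikodym density with respect to $\mu$, reducing the statement to one direct application of Lemma~\ref{Le.2}. The guiding idea is that each $\mu_k$ differs from $\phi_k\,d\mu$ only on the $\mu$-null set $L_k$, so the entire sum $\mu + \mu_1 + \cdots + \mu_n$ is nothing more than $\mu$ reweighted by a function that is bounded above and bounded away from $0$.

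First I would observe that, since $\mu(L_k)=0$ for every $k$, the measure $\mu_k = \chi_{E_k}\phi_k\,d\mu$ coincides with $\phi_k\,d\mu$ (with $\phi_k$ extended arbitrarily on $L_k$; that extension contributes nothing to integrals against $\mu$). Therefore
\[
\mu + \mu_1 + \cdots + \mu_n \;=\; \psi\,d\mu, \qquad \psi \;:=\; 1 + \sum_{k=1}^{n} \phi_k.
\]
Next I would combine the pointwise bounds on the individual $\phi_k$. Subadditivity yields $\mu\bigl(\bigcup_{k=1}^{n} L_k\bigr)=0$, so on the set $\bigcap_{k=1}^{n} E_k$, which has full $\mu$-measure, the inequalities $m_k \leq \phi_k \leq M_k$ all hold simultaneously. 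Setting $m := 1 + \sum_{k=1}^{n} m_k$ and $M := 1 + \sum_{k=1}^{n} M_k$, this gives $0 < m \leq \psi \leq M < \infty$ $\mu$-a.e.\ on $\mathbb{R}^d$.

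Finally, since $\nu \in \mathcal{F}_{A,B}(\mu)$ by hypothesis, Lemma~\ref{Le.2} applied with density $\psi$ produces
\[
\nu \;\in\; \mathcal{F}_{mA,\,MB}\bigl(\psi\,d\mu\bigr) \;=\; \mathcal{F}_{mA,\,MB}\bigl(\mu + \mu_1 + \cdots + \mu_n\bigr),
\]
which is the desired conclusion (and furnishes explicit frame bounds). The argument has no substantive obstacle; the only nontrivial point is recognizing that the finite sum of measures collapses to $\psi\,d\mu$, and this relies precisely on the hypothesis $\mu(L_k)=0$. The $\sigma$-finiteness of $\nu$ is carried along only so that the interchanges implicit in Lemma~\ref{Le.2} are legal.
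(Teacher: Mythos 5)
Your proposal is correct and follows essentially the same route as the paper: both rewrite $\mu+\mu_1+\cdots+\mu_n$ as $(1+\phi_1+\cdots+\phi_n)\,d\mu$ $\mu$-a.e.\ (using $\mu(L_k)=0$) and then apply Lemma~\ref{Le.2} to obtain the frame bounds $(1+\sum_k m_k)A$ and $(1+\sum_k M_k)B$. The only difference is cosmetic: the paper additionally cites Proposition~\ref{3.1} to note that $\nu$ is a frame measure for each $\mu_k$, a remark your streamlined argument does not need.
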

\begin{proof}
Since $ \nu $ is a frame measure for $ \mu $, by Proposition \ref{3.1}, $ \nu $ is also a frame measure for $ \mu_n $ for all $ n\in \{1, \ldots, N \} $. Let $ A, B $ be the bounds for $ \nu $. We have
\begin{equation*}
\mu' := \mu +\mu_1 +\cdots+ \mu_n = (1+\phi_1+ \cdots +\phi_n) d\mu  \;\;\;\;\; \mu-a.e.
\end{equation*} 
Then by Lemma \ref{Le.2}, for all $ f \in L^2( \mu' ) $, 
\begin{equation*}
(1+m_1+ \cdots + m_n)A \|f\|^2_{\mu'}  \leq \int_{\mathbb{R}^d} |\widehat{fd\mu'}|^2 d\nu \leq (1+M_1+ \cdots + M_n)B \|f\|^2_{\mu'}.
\end{equation*}
Hence, we have the desired result.
\end{proof}
\begin{lemma}\label{123} 
\emph{Let $ \nu \in \mathcal{F}_{A, B}(\mu) $. Let $ 0 < m\leq \phi(x)\leq M<\infty $, $ \nu $-a.e. on $ \mathbb{R}^d $. Then $ \phi d\nu \in \mathcal{F}_{mA, MB}(\mu) $ and consequently, for any constant $\alpha > 0$, $ \alpha\nu \in \mathcal{F}_{\alpha A, \alpha B}(\mu) $}.
\end{lemma}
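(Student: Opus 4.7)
The plan is to reduce this to the frame inequality for $\nu$ itself by sandwiching the weighted integral between its two natural bounds. Since the frame inequality involves $\int|\widehat{fd\mu}(t)|^2\,d\nu(t)$, and the new measure $\phi d\nu$ only changes the integrating measure (not the integrand $|\widehat{fd\mu}|^2$), the argument is essentially a pointwise monotonicity statement.

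First I would fix $f\in L^2(\mu)$ and write down the hypothesis:
\begin{equation*}
A\|f\|^2_{L^2(\mu)} \leq \int_{\mathbb{R}^d} |\widehat{fd\mu}(t)|^2\,d\nu(t) \leq B\|f\|^2_{L^2(\mu)}.
\end{equation*}
Next, using $0<m\leq \phi(t)\leq M<\infty$ $\nu$-a.e., I would multiply the nonnegative integrand $|\widehat{fd\mu}(t)|^2$ by $\phi(t)$ pointwise and integrate against $\nu$, yielding
\begin{equation*}
m\int_{\mathbb{R}^d}|\widehat{fd\mu}(t)|^2\,d\nu(t) \leq \int_{\mathbb{R}^d}|\widehat{fd\mu}(t)|^2\,\phi(t)\,d\nu(t) \leq M\int_{\mathbb{R}^d}|\widehat{fd\mu}(t)|^2\,d\nu(t).
\end{equation*}
Recognizing that the middle expression is exactly $\int|\widehat{fd\mu}(t)|^2\,d(\phi d\nu)(t)$, I would then combine this chain of inequalities with the hypothesis to obtain the desired bounds $mA\|f\|^2_{L^2(\mu)}$ and $MB\|f\|^2_{L^2(\mu)}$, which is the claimed frame inequality for $\phi d\nu$.

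For the corollary, I would simply apply the first part with the constant function $\phi\equiv\alpha$, so that $m=M=\alpha$, and observe $\alpha d\nu = \alpha\nu$.

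There is no real obstacle here: the statement is a direct reweighting lemma whose proof amounts to applying the pointwise inequality $m\leq \phi\leq M$ inside the integral. The only point worth being careful about is that $\phi\geq 0$ is indeed bounded so that $\phi d\nu$ defines a legitimate Borel measure on which the frame-measure definition applies, and this is guaranteed by the assumption $M<\infty$.
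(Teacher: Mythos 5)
Your argument is correct and is exactly the direct verification the paper intends: its proof of this lemma is just the one-line remark that the claim "follows directly from the definition," and your pointwise sandwich $m\leq\phi\leq M$ inside the integral against $\nu$, combined with the frame bounds $A,B$, is precisely that verification spelled out. The reduction of the final claim to $\phi\equiv\alpha$ is also the intended reading, so nothing further is needed.
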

\begin{proof}
 Since $ \nu $ is a frame measure for $ \mu $, the lemma follows directly from the definition.
\end{proof}
\begin{remark}\label{**}
Note that if $ \nu \in \mathcal{F}_{A, B}(\mu) $ and $ \nu' \in \mathcal{F}_{A', B'}(\mu) $, then for any two positive constants $ \alpha, \beta $, we have $ \alpha\nu + \beta\nu' \in \mathcal{F}_{\alpha A+\beta A', \alpha B+\beta B'}(\mu) $. Besides, if $ \nu, \nu'\in \mathcal{F}_{A, B}(\mu) $ we know from Proposition \ref{5.3}, $ \alpha\nu +(1-\alpha)\nu' \in \mathcal{F}_{A, B}(\mu) $.
\end{remark}
\begin{remark}
Let $ \mu $ be a Borel measure, and let $ \rho $ be a probability measure. Suppose $ 0 < m\leq \phi(x)\leq M<\infty $ on $ \mathbb{R}^d $. We have $ m\leq \phi \ast\rho\leq M $, Since $ (\phi \ast \rho)(x) = \int_{\mathbb{R}^d } \phi(x-y)d\rho(y) $. Hence by Lemma \ref{123}, if $ \nu \in \mathcal{F}_{A, B}(\mu) $, then $ (\phi \ast \rho) d\nu \in \mathcal{F}_{mA, MB}(\mu) $, and by Lemma \ref{Le.2}, if $ \nu \in \mathcal{F}_{A, B}(\mu) $, then $ \nu \in \mathcal{F}_{mA, MB}((\phi \ast \rho) d\mu) $.
\end{remark}
In the following we give a proposition similar to Theorem \ref{3.4} showing that if $ \mu $ is a Borel measure (not necessarily Lebesgue measure or absolutely continuous with respect to Lebesgue measure) and admits a frame measure $ \nu $, then infinitely many measures which are absolutely continuous with respect to $ \mu $ admit $ \nu $ as a frame measure.
\begin{proposition}\label{SF}
\emph{Suppose $ \mu $ is a Borel measure and $ \nu \in \mathcal{F}_{A, B}(\mu) $. Let $ 0<m\leq \phi(x) \leq M <\infty$ on $ \mathbb{R}^d $, and for every $ n\in\mathbb{N}$ with $ 1\leq n\leq N $, let $ \rho_n $ be a probability measure. Then $ \nu $ is a frame measure for all measures $ \phi\ast\rho_1\ast\cdots\ast\rho_n d\mu$}.
\end{proposition}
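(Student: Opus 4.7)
My plan is to combine two ingredients already established: the fact that convolving a bounded function with a probability measure preserves its bounds (noted in the remark immediately preceding the proposition), and Lemma~\ref{Le.2}, which upgrades a frame measure for $\mu$ to a frame measure for $\psi\, d\mu$ whenever $\psi$ is bounded between positive constants $\mu$-a.e.

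First I would prove the key pointwise inequality by induction on $n$. The base case is given: $m\leq \phi\leq M$ on $\mathbb{R}^d$. For the inductive step, assuming $m\leq \phi\ast\rho_1\ast\cdots\ast\rho_{k-1}\leq M$ everywhere, I write
\begin{equation*}
(\phi\ast\rho_1\ast\cdots\ast\rho_k)(x)=\int_{\mathbb{R}^d}(\phi\ast\rho_1\ast\cdots\ast\rho_{k-1})(x-y)\,d\rho_k(y),
\end{equation*}
and because $\rho_k$ is a probability measure (so $\rho_k(\mathbb{R}^d)=1$), integrating the constants $m$ and $M$ against $\rho_k$ yields $m\leq (\phi\ast\rho_1\ast\cdots\ast\rho_k)(x)\leq M$ for every $x\in\mathbb{R}^d$. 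This confirms that each iterated convolute $\psi_n:=\phi\ast\rho_1\ast\cdots\ast\rho_n$ is a bounded measurable function sandwiched between $m$ and $M$.

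Once this is in hand, I would apply Lemma~\ref{Le.2} with $\psi_n$ in the role of $\phi$. Since $\nu\in\mathcal{F}_{A,B}(\mu)$ and $0<m\leq \psi_n\leq M<\infty$ $\mu$-a.e.\ (indeed everywhere), the lemma gives $\nu\in\mathcal{F}_{mA,MB}(\psi_n\, d\mu)$. But $\psi_n\, d\mu$ is precisely the measure $\phi\ast\rho_1\ast\cdots\ast\rho_n\, d\mu$ named in the statement, so the conclusion follows for each $n\in\{1,\dots,N\}$.

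There is no genuine obstacle; the proposition is essentially a packaged corollary of Lemma~\ref{Le.2}. The only point requiring a moment of care is the uniformity of the bounds $m$ and $M$ across the convolutions, which relies crucially on each $\rho_n$ being a probability measure (so that the convolution is an averaging operator and cannot escape the envelope $[m,M]$); if the $\rho_n$ were merely finite positive measures, the bounds would scale by $\prod_n \rho_n(\mathbb{R}^d)$ and one would need the $\sigma$-finiteness of $\nu$ together with Fubini to justify any measure-theoretic swaps, but these complications do not arise here.
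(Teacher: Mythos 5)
Your proposal is correct and follows essentially the same route as the paper: the paper likewise observes that $m\leq \phi\ast\rho_1\ast\cdots\ast\rho_n\leq M$ (since each $\rho_n$ is a probability measure) and then invokes Lemma~\ref{Le.2} to conclude $\nu\in\mathcal{F}_{mA,MB}(\phi\ast\rho_1\ast\cdots\ast\rho_n\,d\mu)$. Your explicit induction simply fills in the pointwise bound that the paper asserts without detail.
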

\begin{proof}
For $ n\in \{1,\ldots, N \} $, we have $ m \leq \phi \ast\rho_1\ast\cdots\ast\rho_n
\leq M $. Then by Lemma \ref{Le.2},
\begin{equation*}
m\| f\|^2_{L^2(\phi \ast\rho_1\ast\cdots\ast\rho_nd\mu)} \leq \int_{\mathbb{R}^d} \left|\widehat{fd(\phi \ast\rho_1\ast\cdots\ast\rho_n d\mu)}(t)\right|^2 d\nu(t) \leq M\| f\|^2_{L^2(\phi \ast\rho_1\ast\cdots\ast\rho_nd\mu)},
\end{equation*}
 for all $ f \in L^2(\phi \ast\rho_1\ast\cdots\ast\rho_n d\mu) $. Therefore, $ \nu \in \mathcal{F}_{mA, MB}(\phi \ast\rho_1\ast\cdots\ast\rho_n d\mu) $.
\end{proof}
Any fractal measure $ \mu $ obtained from an affine iterated function system has a discrete Bessel measure $ \nu=\sum_{\lambda \in \Lambda_\mu} \delta_\lambda $ (see \cite{2}). Moreover, when $ \mu $ is a Cantor type measure with even contraction ratio, $ \nu =\sum_{\lambda \in \Lambda_\mu} \delta_\lambda $ is a Plancherel measure for $\mu$, i.e., $ \nu \in \mathcal{F}_{1, 1}(\mu) $ (see \cite{10}).
\begin{example}\label{EPX}
Let $ \mu $ be a Cantor type measure with even contraction ratio and let $\nu =\sum_{\lambda \in \Lambda_\mu} \delta_\lambda $ be its associated Plancherel measure . For every $ n\in\mathbb{N}$ with $ 1\leq n\leq N $, let $ \rho_n $ be a probability measure. Suppose $ 0 < m\leq \phi(x)\leq M $ on $ \mathbb{R}^d $. Then by Proposition \ref{SF} we have $ \nu \in \mathcal{F}_{m,M}(\phi\ast\rho_1\ast\cdots\ast\rho_nd\mu) $.
\end{example}
\begin{example}\label{Exp}
Let $ \mu_4 $, $ \mu'_4 $ be the invariant measures (Cantor measures) for the affine IFSs with $ R = 4 $, $ \mathcal{A} = \{0, 2\} $, and $ R = 4 $, $ \mathcal{A}' = \{0, 1\} $ respectively. Then by Corollary $ 4.7 $ from \cite{3}, $\nu_1 = |\widehat{\mu'_4}(x)|^2 \chi_{[0, 1]}d\lambda $, and $ \nu_2 = \sum_{n \in \mathbb{Z}} |\widehat{\mu'_4}(n)|^2 \delta_n $ are Plancherel measures for $ \mu_4 $, ($ \nu_1, \nu_2  \in \mathcal{F}_{1, 1}(\mu_4) $). For every $ n\in\mathbb{N}$ with $ 1\leq n\leq N $, let $ \rho_n $ be a probability measure. Suppose $ 0 < m\leq \phi(x)\leq M $ on $ \mathbb{R}^d $. Then by Proposition~\ref{SF} we have $ \nu_1, \nu_2  \in \mathcal{F}_{m,M}(\phi\ast\rho_1\ast\cdots\ast\rho_nd\mu_4) $.
\end{example}
\begin{corollary}\label{coo}
\emph{There exist infinitely many absolutely continuous measures which admit discrete and continuous frame measures}.
\end{corollary}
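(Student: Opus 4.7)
The plan is to harvest the corollary directly from the examples already assembled earlier in Section~3, since every ingredient needed has been proved. I would split the statement into its two halves as laid out in the introduction: first exhibit an infinite family of Lebesgue-absolutely-continuous measures each having a discrete and a continuous frame measure, then exhibit an infinite family of $\mu_4$-absolutely-continuous measures with the same property.

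For the first half I would take $F=[0,1]^d$ and, for each choice of Borel sets $E_1,\ldots,E_n\subset\mathbb{R}^d$ with $0<\lambda(E_j)<\infty$, form
\[
\eta_{E_1,\ldots,E_n} \;=\; \chi_{[0,1]^d}\, d\Bigl(\lambda\ast\tfrac{1}{\lambda(E_1)}\chi_{E_1}d\lambda\ast\cdots\ast\tfrac{1}{\lambda(E_n)}\chi_{E_n}d\lambda\Bigr).
\]
Each $\eta_{E_1,\ldots,E_n}$ is a convolution of measures absolutely continuous with respect to $\lambda$, hence itself absolutely continuous with respect to $\lambda$. Example~\ref{7.3} supplies the continuous frame measure $\lambda\in\mathcal{F}_{1,1}(\eta_{E_1,\ldots,E_n})$, and Example~\ref{8.3} supplies the discrete frame measure $\sum_{t\in\mathbb{Z}^d}\delta_t\in\mathcal{F}_{1,1}(\eta_{E_1,\ldots,E_n})$. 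Varying $n$ and the sets $E_j$ (say, taking $E_n=[0,r_n]^d$ for a countable family of distinct radii $r_n$) produces infinitely many distinct Radon--Nikodym densities and hence infinitely many pairwise distinct measures of this form.

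For the second half I would fix the Cantor measure $\mu_4$ from Example~\ref{Exp}, which admits both the continuous frame measure $\nu_1=|\widehat{\mu'_4}|^2\chi_{[0,1]}d\lambda$ and the discrete frame measure $\nu_2=\sum_{n\in\mathbb{Z}}|\widehat{\mu'_4}(n)|^2\delta_n$. By Proposition~\ref{SF}, for any bounded function $\phi$ with $0<m\le\phi\le M<\infty$ and any probability measures $\rho_1,\ldots,\rho_n$, both $\nu_1$ and $\nu_2$ lie in $\mathcal{F}_{m,M}(\phi\ast\rho_1\ast\cdots\ast\rho_n\,d\mu_4)$. Each such measure is absolutely continuous with respect to $\mu_4$, and varying $\phi$, $n$, and the $\rho_j$ again yields infinitely many distinct measures enjoying the required property.

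There is essentially no hard step here; the corollary is a bookkeeping consequence of Theorem~\ref{3.4}, Proposition~\ref{SF}, and the worked examples. The only point requiring a brief justification is that the parametric families above contain infinitely many genuinely different measures, which is immediate once one notes that different $E_n$ (respectively different $\phi$) produce convolutions with different densities. Thus both clauses of Corollary~\ref{coo} follow, completing the argument.
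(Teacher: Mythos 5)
Your second half (the $\mu_4$ family) is fine and is essentially the paper's argument: Example \ref{Exp} plus Proposition \ref{SF} give infinitely many measures $\phi\ast\rho_1\ast\cdots\ast\rho_n\,d\mu_4$, absolutely continuous with respect to $\mu_4$, admitting the continuous frame measure $\nu_1$ and the discrete frame measure $\nu_2$ (the paper additionally invokes Corollary \ref{Cco} to enlarge this family by restriction). The gap is in your first half. Since Lebesgue measure is translation invariant, $\lambda\ast\rho=\lambda$ for \emph{every} probability measure $\rho$; equivalently, the density of $\lambda\ast\tfrac{1}{\lambda(E_1)}\chi_{E_1}d\lambda\ast\cdots\ast\tfrac{1}{\lambda(E_n)}\chi_{E_n}d\lambda$ is $1\ast\tfrac{1}{\lambda(E_1)}\chi_{E_1}\ast\cdots\ast\tfrac{1}{\lambda(E_n)}\chi_{E_n}\equiv 1$. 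Hence every one of your measures $\eta_{E_1,\ldots,E_n}$ equals $\chi_{[0,1]^d}\,d\lambda$, regardless of $n$ and of the sets $E_j$: the family you propose is a single measure, and the sentence ``different $E_n$ produce convolutions with different densities'' is exactly the step that fails. So as written you have exhibited one Lebesgue-absolutely-continuous measure with a discrete and a continuous frame measure, not infinitely many.

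The repair is easy, and it is instructive to compare with what the paper does. You can keep your route but make the underlying density vary: apply Theorem \ref{3.4} with a non-constant $\mu_0=\phi\,d\lambda$, $0<m\le\phi\le M$, or more simply use Propositions \ref{SA} and \ref{3.2} directly, which show that for every $E\subseteq[0,1]^d$ and every such $\phi$ the measure $\chi_E\phi\,d\lambda$ admits both $\sum_{t\in\mathbb{Z}^d}\delta_t$ and $\lambda$ as frame measures; varying $\phi$ (or $E$) genuinely produces infinitely many distinct absolutely continuous measures. The paper instead gets its infinite Lebesgue family by varying the set $E$ of finite Lebesgue measure: Theorem \ref{3.9} supplies a discrete frame measure for $\chi_E d\lambda$ with bounds $c|E|,C|E|$, while Proposition \ref{3.2} and Lemma \ref{123} supply continuous ones ($\lambda$, and $\phi\,d\lambda$ for $c|E|\le\phi\le C|E|$), after which Theorem \ref{3.4} is invoked; note that the paper's concluding convolution $\chi_E d(\lambda\ast\mu_1\ast\cdots\ast\mu_n)$ suffers the same collapse to $\chi_E d\lambda$, but its infinitude rests on varying $E$, not on the convolutions, so its proof is unaffected. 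Your infinitude claim, by contrast, rests entirely on the convolutions being distinct, which they are not.
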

\begin{proof}
Based on Example~\ref{Exp} and Corollary \ref{Cco}, there are infinitely many absolutely continuous measures with respect to $ \mu_4 $ which admit discrete and continuous frame measures. On the other hand, there are also infinitely many absolutely continuous measures with respect to Lebesgue measure which admit discrete and continuous frame measures, since by Theorem \ref{3.9}, there are positive constants $ c $, $ C $ such that for every set $ E \subset \mathbb{R}^d $ of finite Lebesgue measure, a discrete measure $ \nu=\sum_{\lambda \in \Lambda_E} \delta_{\lambda_E} $ is a frame measure for $ \chi_E d\lambda $. Precisely, we have $ \nu  \in \mathcal{F}_{c|E|,C|E|}(\chi_E d\lambda) $. In addition, by Proposition~\ref{3.2}, $ \lambda $ is a Plancherel measure for $ \chi_E d\lambda $ and for any function $ c|E|\leq \phi(x) \leq C|E| $ by Lemma~\ref{123}, $ \phi d\lambda  \in \mathcal{F}_{c|E|,C|E|}(\chi_E d\lambda) $. Besides, if $ E_n \subset \mathbb{R}^d $, $ \lambda(E_n) < \infty $ for $ n\in \{1,\ldots, N \} $ and if $ \mu_n= \frac{1}{\lambda(E_n)}\chi_{E_n} d\lambda $, then by Theorem~\ref{3.4}, we have $ \phi d\lambda, \nu \in \mathcal{F}_{c|E|,C|E|}(\chi_E d(\lambda\ast\mu_1\ast\cdots\mu_n)) $. 
\end{proof}
\begin{lemma}[\cite{6}]
\emph{Let $ \mu $ be a Borel measure on $ \mathbb{R}^d $. Then $ \nu $ is a frame measure for $ \mu $ if and only if $ \nu $ is a frame measure for $ \delta_t \ast \mu $ with the same frame bounds, where $ t\in \mathbb{R}^d $}. 
\end{lemma}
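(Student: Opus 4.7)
The key observation is that $\mu' := \delta_t \ast \mu$ is just the translate of $\mu$ by $t$: for any Borel set $E$, $\mu'(E) = \mu(E - t)$, or equivalently, for any $\mu'$-integrable function $h$,
\begin{equation*}
\int_{\mathbb{R}^d} h(x)\, d\mu'(x) = \int_{\mathbb{R}^d} h(x+t)\, d\mu(x).
\end{equation*}
The plan is to exploit this to produce an isometric isomorphism between $L^2(\mu')$ and $L^2(\mu)$ under which the two frame inequalities are identical.

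First, applying the above change of variable with $h = |f|^2$ shows that the map $T \colon L^2(\mu') \to L^2(\mu)$ given by $(Tf)(x) = f(x+t)$ is a well-defined isometric isomorphism, with $\|Tf\|_{L^2(\mu)} = \|f\|_{L^2(\mu')}$. Next, I would compute the Fourier transform against $\mu'$:
\begin{equation*}
\widehat{f d\mu'}(y) = \int_{\mathbb{R}^d} f(x) e_{-y}(x)\, d\mu'(x) = \int_{\mathbb{R}^d} f(x+t) e_{-y}(x+t)\, d\mu(x) = e_{-y}(t)\, \widehat{(Tf)\, d\mu}(y),
\end{equation*}
using $e_{-y}(x+t) = e_{-y}(x) e_{-y}(t)$. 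Since $|e_{-y}(t)| = 1$, taking moduli yields $|\widehat{f d\mu'}(y)|^2 = |\widehat{(Tf) d\mu}(y)|^2$ for every $y \in \mathbb{R}^d$.

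Now the implication is immediate in both directions. If $\nu \in \mathcal{F}_{A,B}(\mu)$, then for any $f \in L^2(\mu')$, setting $g = Tf \in L^2(\mu)$, the frame inequality for $\mu$ applied to $g$ reads
\begin{equation*}
A \|f\|^2_{L^2(\mu')} = A \|g\|^2_{L^2(\mu)} \le \int_{\mathbb{R}^d} |\widehat{g d\mu}(y)|^2 d\nu(y) = \int_{\mathbb{R}^d} |\widehat{f d\mu'}(y)|^2 d\nu(y) \le B \|g\|^2_{L^2(\mu)} = B \|f\|^2_{L^2(\mu')},
\end{equation*}
so $\nu \in \mathcal{F}_{A,B}(\mu')$ with the same bounds. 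For the converse, apply the same argument with $t$ replaced by $-t$, using that $\delta_{-t} \ast (\delta_t \ast \mu) = \mu$.

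There is no real obstacle here: the argument is entirely formal, resting on the translation invariance of the modulus of the Fourier transform. The only point requiring a bit of care is confirming that the change-of-variable identity for $\delta_t \ast \mu$ holds in the generality needed (Borel measure, not necessarily finite), which follows from the definition of convolution by a point mass and a standard monotone-class or approximation argument applied to nonnegative measurable $h$.
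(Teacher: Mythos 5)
Your proof is correct, and in fact the paper itself offers no argument for this lemma at all: it is simply quoted from the reference of Fu and Lai, so there is nothing in the text to compare against. Your translation argument --- identifying $\delta_t \ast \mu$ with the translate of $\mu$, noting that $(Tf)(x)=f(x+t)$ is an isometry of $L^2(\delta_t\ast\mu)$ onto $L^2(\mu)$, and that $|\widehat{f\,d(\delta_t\ast\mu)}(y)|=|\widehat{(Tf)\,d\mu}(y)|$ because the translation only contributes the unimodular factor $e_{-y}(t)$ --- is exactly the standard proof one would expect, and the converse direction via $\delta_{-t}\ast(\delta_t\ast\mu)=\mu$ closes the equivalence with the same bounds.
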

The last lemma from \cite{6} shows that we still can construct infinitely many measures $ \mu $ which admit frame measures $ \nu $.
                                                                                                                                                                    
\section*{Acknowledgements}
The authors would like to thank Dr. Nasser Golestani for his valuable guidance and helpful comments.

\small $^{a}$Department of Mathematics , Science and Research Branch, Islamic Azad University, Tehran, Iran.

\emph{E-mail address}: \small{fz.farhadi61@yahoo.com} \\ 
\emph{E-mail address}: \small{mrmardanbeigi@srbiau.ac.ir}\\ 
 
 $^{b}$Department of Mathematics, Faculty of Science, Islamic Azad University, Central Tehran Branch,Tehran, Iran.
 
\emph{E-mail address}: \small{moh.asgari@iauctb.ac.ir}\\ 

\end{document}